\documentclass[11pt]{article}
\usepackage[margin=1.5in]{geometry}
\usepackage{authblk, amsfonts, amsmath, amssymb, amsthm, bbm, xcolor, mathrsfs}
\usepackage{graphicx} 
\usepackage{graphicx, tikz}
\usetikzlibrary{decorations.markings}
\usetikzlibrary{arrows.meta}
\usepackage{enumitem} 
\usepackage{hyperref} 
\usepackage[textwidth=1.5in]{todonotes} 
\usepackage{thm-restate}

\newtheorem{thm}{Theorem}[section]

\newtheorem{corollary}[thm]{Corollary}

\newtheorem{lemma}[thm]{Lemma}

\theoremstyle{definition}

\newcommand{\C}{\mathcal{C}}
\DeclareMathOperator{\val}{val}

\definecolor{DeepCarrotOrange}{rgb}{0.91, 0.41, 0.17}
\definecolor{BananaYellow}{rgb}{1.0, 0.88, 0.21}

\definecolor{CornflowerBlue}{rgb}{0.39, 0.58, 0.93}
\definecolor{Magenta}{rgb}{0.50, 0.0, 0.50}
\definecolor{AppleGreen}{rgb}{0.55, 0.71, 0.0}
\definecolor{AO}{rgb}{0.0, 0.5, 0.0}

\setlist[enumerate]{itemsep=0em}
\tikzset{MyNode/.style={circle, draw, inner sep=2,outer sep=0, fill=gray}}
\tikzset{MyRedArc/.style={line width=2.3pt, red}}
\tikzset{MyBlueArc/.style={line width=2.3pt, blue}}
\tikzset{
  arrHalfway/.style={
    postaction={
      very thick,
      decorate,
      decoration={
        markings,
        mark= at position .56 with {\arrow{>}},
}}}}
\tikzset{
  arrHalfwayLoop/.style={
    postaction={
      very thick,
      decorate,
      decoration={
        markings,
        mark= at position .5 with {\arrow{<}},
}}}}

\title{The structure of group-labeled graphs\\ forbidding an immersion}
\author{Rose McCarty\thanks{Supported by the National Science Foundation under Grant No.~DMS-2452111.}}
\affil{Schools of Mathematics and Computer Science, Georgia Institute of Technology, USA.}
\author{Caleb McFarland\thanks{Supported in part by the Georgia Tech ARC-ACO Fellowship and in part by the National Science Foundation under Grant No.~DMS-2452111.}}
\affil{School of Mathematics, Georgia Institute of Technology, USA.}
\author{Paul Wollan}
\affil{Sapienza Universit\`{a} di Roma, Italy.}

\date{February 2025}

\begin{document}

\maketitle

\begin{abstract}
    A \emph{$\Gamma$-labeled} graph is an oriented graph with edges invertibly labeled by a group $\Gamma$. We prove a structure theorem for $\Gamma$-labeled graphs which forbid a fixed $\Gamma$-labeled graph as an immersion, for any finite $\Gamma$. Roughly, we show that such graphs admit a tree-cut decomposition in which every bag either contains few high degree vertices or is nearly signed over a proper subgroup of $\Gamma$. 
\end{abstract}

\section{Introduction}

Given a group $\Gamma$, a \emph{$\Gamma$-labeled graph} (also called a \emph{$\Gamma$-gain graph} or a \emph{$\Gamma$-voltage graph}) is an undirected graph whose oriented edges are invertibly labeled in $\Gamma$. That is, each edge of the graph has two orientations, and these orientations are associated with a group element $\alpha$ and its inverse $\alpha^{-1}$, respectively. Group-labeled graphs have been used to study graph embeddings and coverings~\cite{Gross74, GrossTucker77, GrossTucker01}, symmetry-forced rigidity~\cite{Bernstein22, KST21, Tanigawa15} (i.e., bar and join frameworks where every non-rigid motion breaks a specified symmetry), and the structural~\cite{GGW14, GerardsGraphicMatroids, SeymourGraphicMatroids} and extremal~\cite{NW22, PSWX24} properties of matroids. It is also expected that group-labeled graphs will play a key role in understanding the structure of graphs with a forbidden vertex-minor~\cite{BouchetCircleChar, McCartyThesis, decomposingSigned}.


In this paper, we prove a structure theorem for $\Gamma$-labeled graphs which forbid a fixed $\Gamma$-labeled graph as an immersion, for any finite group $\Gamma$. Roughly, an \emph{immersion} of a graph $H$ into a graph $G$ maps vertices of $H$ injectively to vertices of $G$, and maps edges of $H$ to edge-disjoint trails of $G$ with the appropriate ends. If $H$ and $G$ have associated $\Gamma$-labelings, then we require the label of each edge of $H$ to match the label of the corresponding trail in $G$. (A \emph{trail} is a sequence of oriented edges which yields a walk without repeated edges in the underlying undirected graph. Thus there is a corresponding word over $\Gamma$, and its value is the \emph{label} of the trail.) Since group-labeled graphs are usually considered up to an equivalence relation called shifting, we allow shifting in $G$ (and as a consequence, also in $H$) as well when defining immersions. See Section~\ref{sec:prelim} for precise definitions.

Our structure theorem roughly says the following; see Theorem~\ref{thm:MainStructureThm} for the full formal statement. It is convenient to require the graph to be $2$-edge-connected, but the theorem below may be applied separately to each edge-block of a graph.

\begin{thm}[Informal Statement]
\label{thm:informalStructure}
For any finite group $\Gamma$ and any $\Gamma$-labeled graph $(H, \gamma_H)$, there exists an integer $t$ so that for any $2$-edge-connected $\Gamma$-labeled graph $(G, \gamma)$ which forbids an immersion of $(H, \gamma_H)$, there exists a shifting $\gamma'$ of $\gamma$ so that $(G, \gamma')$ ``decomposes along small edge-cuts into pieces'' so that each piece either:\begin{enumerate}
\item has at most $t$ vertices of degree more than $t$, or
\item has at most $t$ edges labeled outside of a proper subgroup $\Gamma'$ of~$\Gamma$.
\end{enumerate}
\end{thm}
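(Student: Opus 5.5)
We would prove the formal version, Theorem~\ref{thm:MainStructureThm}, by induction on $|\Gamma|$. The base case $\Gamma$ trivial is Wollan's structure theorem for graphs excluding an immersion. It gives a tree-cut decomposition of adhesion bounded in $|V(H)|+|E(H)|$ in which every torso has a bounded number of vertices of degree larger than $t$. For general $\Gamma$ we start from the same global tool applied to the underlying graph. Fix a large $k$. Wollan's theorem gives a tree-cut decomposition of $G$ with adhesion less than $k$, together with a certain dichotomy for each torso: either it has few vertices of high degree, or it contains a large \emph{well-linked} (edge-connectivity) structure. Concretely, this structure is a set $X$ of many vertices that are pairwise joined by $k$ edge-disjoint paths, or equivalently a large clique immersion or wall-like immersion. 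Bags of the first type already satisfy outcome 1. The work is to refine the remaining bags.

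For a bag containing a highly edge-connected set $X$, we pass to a shifting in which a spanning tree of the torso is labeled by the identity. The labels of the remaining edges then generate a group of \emph{cycle labels}, which we analyze as follows.

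\textbf{Key step (routing lemma).} Suppose there are many edge-disjoint closed trails through $X$ whose labels generate all of $\Gamma$, and even more, can be chosen to realize each element of $\Gamma$ many times. Then we can immerse $(H,\gamma_H)$. To do this, map $V(H)$ into $X$ and route each edge of $H$ along a trail in the well-linked set. Before arriving at its endpoint, the trail detours through a gadget cycle that corrects its label to the required value. Because $\Gamma$ is finite, correcting to an arbitrary element needs only a bounded number of gadgets, at most $|\Gamma|$ by a pigeonhole/Cauchy--Davenport-style argument on products of generators. Edge-disjointness is preserved by taking $k \gg |E(H)|\cdot|\Gamma|$.

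If instead the gadgets cannot be found, an Erd\H{o}s--P\'osa-type statement for non-$\Gamma'$ cycles applies. This is the edge version of the theorems for group-labeled cycles, packing of cycles with labels outside a subgroup. From it we obtain a bounded set $F$ of edges such that, after shifting, every edge of the torso outside $F$ is labeled in some proper subgroup $\Gamma'$. Since there are finitely many subgroups, we iterate over maximal ones. This gives outcome 2, with $t$ bounding $|F|$.

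The main obstacle is making the local shifts in different bags compatible into one global shifting $\gamma'$, and ensuring the Erd\H{o}s--P\'osa step works inside a torso rather than in $G$. Torso edges created by contracting subtrees carry labels of trails through other bags, and those labels may be arbitrary. We would handle this in two parts. First, since adhesion is less than $k$, the shifting on each bag can be adjusted at the boundary: shifts at the bounded number of attachment vertices propagate consistently down the decomposition tree, which is a tree, so there are no cycles of constraints. Second, the few adhesion edges whose labels are forced outside $\Gamma'$ are absorbed into the exceptional set, enlarging $t$ by at most $k$ per bag. We also have to check that an immersion found in a torso lifts to $G$, which uses 2-edge-connectivity and the fact that trails into a subtree through a small cut can be rerouted inside that subtree with any label realized there. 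Where needed, we apply the induction hypothesis on $|\Gamma|$ to the pieces labeled in $\Gamma'$. These pieces may still need decomposing if they contain an immersion of the restriction of $H$.
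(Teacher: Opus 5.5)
\textbf{There is a genuine gap in your key step.} You say edge-disjointness between the routing trails and the label-correcting gadget cycles is ``preserved by taking $k \gg |E(H)|\cdot|\Gamma|$''. Nothing in your argument bounds how many routing trails a gadget cycle meets. The cycles produced by an Erd\H{o}s--P\'osa packing are only edge-disjoint from one another, and a single cycle may share edges with every trail of your well-linked structure. Enlarging $k$ therefore does not help. This disentangling is the heart of the paper's local argument:
\begin{itemize}
\item Everything is rooted at the branch center $x$ of a flower immersion $\varphi$. Rooting is needed for the duality itself: for unrooted cycles, ``labeled outside $\Gamma'$'' is not invariant under shifting when $\Gamma'$ is not normal. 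Corollary~\ref{cor:EP}, derived from Pap's theorem, handles circuits beginning at $x$.
\item Lemma~\ref{lem:uncrossingFlower} chooses the packing $\C$ to minimize the number of its transitions that are not transitions of $\varphi$. An exchange argument then shows that all but $2r$ branch trails avoid $\C$, so one can sacrifice $n$ petals and a few trails per petal.
\item The generators are accumulated along a chain $\{1\}=\Gamma_0\subsetneq\Gamma_1\subsetneq\dots\subsetneq\Gamma_\ell=\Gamma$. At each step, new circuits are disentangled from the current generating flower (Lemma~\ref{lem:LocalStructureStep}). Separate packings for the various maximal subgroups, as you propose, can overlap each other as well as the routing.
\end{itemize}

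\textbf{The covering outcome also does not give what you claim.} From a hitting set $F$ for circuits at $x$ labeled outside $\Gamma'$, Lemma~\ref{lem:edgeBlock} only says that, after shifting, the \emph{edge-block of $G\setminus F$ containing $x$} is labeled over $\Gamma'$. The rest of a bag chosen from the underlying graph can contain parts hanging off the core by few edges, with arbitrarily many edges outside $\Gamma'$. Moreover, Wollan's theorem does not apply to the underlying graph at all, since it need not forbid any unlabeled immersion. For example, a huge clique with all labels $1$ forbids a one-vertex $H$ with a loop labeled $\alpha\neq 1$. So you only have a generic decomposition whose bags ignore the labels.

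The paper instead lets the labeling choose the pieces. The piece is the edge-block $B$ itself, which has a certificate of value at most $2|F|$ by the $2$-edge-connectivity count in Lemma~\ref{lem:CoresHaveCertificate}. These sets may cross for different cores. The paper makes them pairwise disjoint and refined using posimodularity of $\val$ and a container system minimizing $\sum_C(|V(G)|+1)^{\val(C)}$ and then $\sum_C|C|$ (Lemma~\ref{lem:LaminarCertificates}). It then builds the tree-cut decomposition by contracting containers one at a time (Lemma~\ref{lem:decompFromContainers}). Your ``refine the remaining bags'' step is exactly where this uncrossing is needed, and the proposal does not supply it.

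Once the uncrossing is in place, the obstacles you emphasize disappear:
\begin{itemize}
\item The local argument lives in $G$ rather than in torsos, so no immersion needs lifting.
\item The bags are disjoint and certificates only involve edges with both ends in a bag, so shifting separately at the vertices of each bag is automatically consistent.
\item No induction on $|\Gamma|$ is needed. A piece with a small certificate needs no further decomposition, and as a $\Gamma'$-labeled graph it need not forbid anything.
\end{itemize}
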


We also prove a rough converse to this theorem in Lemma~\ref{lem:convOfStructure}. That is, we prove that any $\Gamma$-labeled graph with such a decomposition forbids some (possibly larger) $\Gamma$-labeled graph as an immersion. We note that if $(H, \gamma_H)$ has $n$ vertices and each vertex is incident to at most $k$ edges, then our integer $t$ is $4kn|\Gamma|^{6+\lfloor \log_2(|\Gamma|)\rfloor}$. So for any fixed group this is $\mathcal{O}(nk)$. We leave it as an open problem whether there is a structure theorem whose parameters are polynomial in $k$, $n$, and $|\Gamma|$. The key case actually seems to be when $H$ has only one vertex.



A direct corollary of Theorem~\ref{thm:informalStructure} is that every $\Gamma$-labeled graph which forbids an $(H, \gamma_H)$ immersion and has sufficiently high edge-connectivity and number of vertices has (after possibly shifting) a bounded number of edges labeled outside of a proper subgroup $\Gamma'$ of $\Gamma$. This key outcome has also appeared in other work on the structure of $\Gamma$-labeled graphs. First of all, our theorem generalizes the theorem of Churchley and Mohar~\cite{ChurchleyMohar2018} for \emph{odd immersions}; this is the setting where $\Gamma=\mathbb{Z}_2$ and every edge is assigned the non-identity label. In this setting, every graph which is ``almost'' labeled over a proper subgroup of $\mathbb{Z}_2$ (as in the second outcome of the theorem) is actually the union of a bounded number of edges and a bipartite graph. So there is hope that any theorem which holds for bipartite graphs might ``approximately'' hold for graphs with a forbidden odd immersion.


The chromatic number is one example of how properties for bipartite graphs can be ``approximately" extended to graphs excluding a forbidding odd immersion.  Bipartite graphs trivially have bounded chromatic number.  The second author has built on such structural arguments to prove that every graph without $K_t$ as an odd immersion is $\mathcal{O}(t)$-colorable~\cite{mcfarland2025}. (We believe that directly applying the general structural results would yield an $\mathcal{O}(t^2)$ bound.) Overall, there is a rich literature showing that graphs with forbidden ``odd'' substructures tend to have small chromatic number; see for instance~\cite{Geelen2009oddMinor, LiuOddSubdivisions, Steiner22, SteinerSubdivisions, ThomassenSubdivisions} and the recent proof that Odd Hadwiger's Conjecture is actually false~\cite{KSSW25}. The development of efficient algorithms provides another area in which this approach can be applied to graphs excluding some ``odd" substructure. Intuitively, this is because in a bipartite graph: 1) one can easily tell whether two vertices are joined by an odd or even length path and 2) many integer programming problems can be solved in polynomial time. For examples of efficient algorithms in classes of graphs excluding ``odd" substructures see~\cite{CGKMW2025, FJWY2025, HuynhThesis, KRW2011, LiuYoo25}.


So in general, structure theorems for unlabeled graphs have broad applications to other areas such as graph coloring and algorithms and complexity. The difficulty for $\Gamma$-labeled graphs is that it is not clear which problems become easier when the graph is labeled over a proper subgroup of $\Gamma$ (except in the previously mentioned odd immersions case, where this means that the graph is ``almost'' bipartite). This question arises when studying \emph{any} notion of graph containment for $\Gamma$-labeled graphs. So it seems like an interesting direction for future research. Moreover, this setting serves as a midway point when trying to generalize work on graphs to binary matroids (and beyond). We note that the frame matroids of graphs labeled over the multiplicative group of a field $\mathbb{F}$ are themselves $\mathbb{F}$-representable~\cite{Zaslavsky03}. We refer the reader to work of Zaslavsky~\cite{zaslavskyI, Zaslavsky94} for more information about the interplay between graphs and their associated matroids.


There is another main example of a structure theorem for $\Gamma$-labeled graphs. As part of their matroid minors project with Whittle, Geelen and Gerards~\cite{GeelenGerardsGroupLabeled} proved a ``local structure theorem'' for $\Gamma$-labeled graphs with a forbidden minor when $\Gamma$ is finite and abelian. Their proof relies on a packing/covering duality by Chudnovsky, Geelen, Gerards, Goddyn, Lohman, and Seymour~\cite{chudnovsky2006packing}. This theorem describes the obstructions to packing a large collection of vertex-disjoint paths so that each path has a label which is not the identity element and has ends in a fixed set of vertices.

Our proof of Theorem~\ref{thm:informalStructure} follows a similar approach. In order to deal with the non-abelian case, we apply a more general packing-covering duality where the paths are labeled outside of a fixed subgroup $\Gamma'$ of $\Gamma$. Very general duality theorems of this flavor are proven in~\cite{GeelenRodriguezAPaths, PapAPaths, YamaguchiAPaths}; we apply the theorem of Pap~\cite{PapAPaths}. (Note that when $\Gamma'$ is a normal subgroup of $\Gamma$ such as in the abelian case, the theorem of Chudnovsky et al.~\cite{chudnovsky2006packing} can be applied to a quotient labeling in $\Gamma/\Gamma'$.) 

We also build on work for (unlabeled) graphs with a forbidden immersion. The third author~\cite{WOLLAN2015}, and independently DeVos, McDonald, Mohar, and Scheide~\cite{Devos2013note}, gave a structure theorem in this setting. We use some of the tools from this first paper. In particular, we use \emph{tree-cut decompositions}, which show how to decompose a graph in a tree-like way along edge cuts. For immersions of unlabeled graphs (or equivalently, when the group only has one element), the structure is similar to Theorem~\ref{thm:informalStructure}, except the second outcome goes away; every piece has few high degree vertices. Our key new contributions include Lemma~\ref{lem:uncrossingFlower} which lets us ``disentangle'' two carefully chosen immersions, and Lemmas~\ref{lem:LaminarCertificates} and~\ref{lem:decompFromContainers} which let us ``uncross edge cuts'' and combine many different ``local structures'' into a single global structure.

We note that there is a long history of studying the structure of group-labeled graphs, and in particular of studying approximate packing/covering dualities; see for instance~\cite{HJW2019, KK2016}. (These approximate packing/covering dualities are known as \emph{Erd\H{o}s-P\'{o}sa properties}.) Finally, there is also work studying the structure of group-labeled graphs where both orientations correspond to the same group value; see~\cite{LiuYoo25, ThomasYoo23, Wollan2010} for more information about this setting.

This paper is organized as follows. In Section~\ref{sec:prelim} we discuss our notation and the basics of group-labeled graphs and immersions, and then we state the structure theorem (Theorem~\ref{thm:MainStructureThm}). In Section~\ref{sec:packing} we deduce a packing/covering duality for edge-disjoint circuits hitting a fixed vertex from the vertex-disjoint version. In Section~\ref{sec:richFlower} we prove our ``local structure theorem'', Theorem~\ref{thm:enrichingFlower}. Finally, in Section~\ref{sec:strThm}, we prove the structure theorem as well as its rough converse, Lemma~\ref{lem:convOfStructure}.

\section{Preliminaries}
\label{sec:prelim}

We use standard graph theoretic notation. Given a graph $G$ and a set of vertices $B$, we write $\delta_G(B)$ (or sometimes just $\delta(B)$) for the set of all edges of $G$ with exactly one end in $B$. If $B$ consists of just one vertex $v$, then we write $\delta(v)$. We also write $E(B)$ for the set of all edges with both ends in $B$. If $X$ is a set of edges, a set of vertices, or a single edge or vertex of $G$, then we write $G\setminus X$ for the graph obtained from $G$ by deleting the vertices/edges in $X$.

All graphs are undirected and may have loops and parallel edges. For the group labeling, however, the orientation of an edge matters; traversing an edge in one direction corresponds to a group element $\alpha$, while traversing it in the other direction corresponds to the inverse $\alpha^{-1}$. So we now introduce some notation to deal with orientations. Given a graph $G$, each edge $e$ has two distinct orientations which we denote by $\vec{e}$ and $\vec{e}^{-1}$. Each oriented edge $\vec{e}$ has a \emph{tail} and a \emph{head}, and we say that $\vec{e}$ is oriented \emph{from} its tail \emph{to} its head. We call $e$ the \textit{underlying} edge of an oriented edge $\vec{e}$. We write $\vec{E}(G)$ for the set of all orientations of edges in $G$. 



Let $\Gamma$ be a group. We denote the group action multiplicatively and the identity by 1. A \emph{$\Gamma$-labeling} of a graph $G$ is a function $\gamma: \vec{E}(G) \rightarrow \Gamma$ such that $\gamma(\vec{e}^{-1}) = \gamma(\vec{e})^{-1}$ for any edge $e$ of $G$. We call $\gamma(\vec{e})$ the \textit{label} of an oriented edge $\vec{e}$. A \textit{$\Gamma$-labeled graph} is a pair $(G, \gamma)$ where $G$ is a graph and $\gamma$ is a $\Gamma$-labeling of $G$. There is an equivalence relation on $\Gamma$-labelings defined as follows. Given a $\Gamma$-labeled graph $(G,\gamma)$, a vertex $v \in V(G)$, and a group element $\alpha \in \Gamma$, \textit{shifting by $\alpha$ at $v$} means to append $\alpha$ at the beginning of the label of each oriented edge with tail $v$ and to append $\alpha^{-1}$ at the end of the label of each oriented edge with head $v$. Thus, if $\vec{e}$ is an oriented loop at $v$, then its new label after shifting is $\alpha\gamma(\vec{e})\alpha^{-1}$. Likewise, if $\vec{e}$ is an oriented edge whose tail is $v$ and whose head is not $v$, then its new label is $\alpha\gamma(\vec{e})$. This is the inverse of $\gamma(\vec{e}^{-1})\alpha^{-1}$, which is the new label of $\vec{e}^{-1}$. We say that a group-labeling $\gamma'$ of $G$ is a \emph{shifting} of $\gamma$ if it can be obtained by a (finite) sequence of such shiftings. 


A \textit{trail} $T$ in a graph $G$ is a sequence of oriented edges $\vec{e_1}\vec{e_2}\dots\vec{e_n}$ such that the underlying edges are all distinct, and for each $i \in [n-1]$, the head of $\vec{e_i}$ equals the tail of $\vec{e}_{i+1}$. Note that the endpoints of the edges need not be distinct; the same vertex may be visited multiple times. The  \textit{tail} of $T$ is the tail of $\vec{e}_1$ and the \textit{head} of $T$ is the head of $\vec{e_n}$. We call the head and tail of $T$ its \textit{ends}. Given a group labeling $\gamma$ of $G$, the \emph{label} of $T$ is $\gamma(T) = \gamma(\vec{e_1})\gamma(\vec{e_2})\ldots\gamma(\vec{e_n})$. The \emph{inverse} of $T$ is the trail $T^{-1}$ which is the sequence of edges in the reverse order and with the reverse orientation. Thus $\gamma(T^{-1})=\gamma(T)^{-1}$. The \textit{transitions} of $T$ are the ordered pairs $(\vec{e_i}, \vec{e_{i+1}})$ for $i \in [n-1]$. Thus $T$ has $n-1$ transitions. We view transitions of the form $(\vec{e_1}, \vec{e_{2}})$ and $(\vec{e_2}^{-1}, \vec{e_{1}}^{-1})$ as being equivalent; thus $T^{-1}$ has the same transitions as $T$. 




\subsection{Immersions}
\label{subsec:Immersions}

We refer the reader to Figure~\ref{fig:immersion} for a depiction of an immersion of a graph $H$ into a graph $G$. We now define immersions of group-labeled graphs; we define immersions of unlabeled graphs the same way, except we do not care about the labels of edges.

Let $\Gamma$ be a group. We say that a $\Gamma$-labeled graph $(G, \gamma_G)$ admits an \textit{immersion} of a $\Gamma$-labeled graph $(H, \gamma_H)$ if there exist a shifting $\gamma_G'$ of $\gamma_G$ and a function $\varphi$ which injectively maps $V(H)$ to $V(G)$ and which maps $\vec{E}(H)$ to trails in $G$ such that\begin{enumerate}
    \item for any edge $e$ of $H$, the trails $\varphi(\vec{e})$ and $\varphi(\vec{e}^{-1})$ are inverses,
    \item for any distinct edges $e_1$ and $e_2$ of $H$, the trails $\varphi(\vec{e_1})$ and $\varphi(\vec{e_2})$ are edge-disjoint, and
    \item for any oriented edge $\vec{e}$ of $H$, the tail of $\vec{e}$ is mapped to the tail of $\varphi(\vec{e})$, the head of $\vec{e}$ is mapped to the head of $\varphi(\vec{e})$, and $\gamma_H(\vec{e}) = \gamma_G'(\varphi(\vec{e}))$.
\end{enumerate} We say that $\varphi$ is an \emph{immersion} of $(H, \gamma_H)$ \emph{into} $(G, \gamma_G)$. We call the image of $V(H)$ under $\varphi$ the \textit{branch vertices} and the image of $\vec{E}(H)$ under $\varphi$ the \textit{branch trails}. The \textit{transitions} of an immersion are the transitions of its branch trails. We note that the definition above is often referred to as a \emph{weak} immersion in the literature, whereas \emph{strong} immersions require that no ``internal'' vertex of a branch trail is also a branch vertex; see~\cite{DvorakWollanStrong16, MarxWollanStrong14, WOLLAN2015} for the precise definitions.

\begin{figure}
\centering
\begin{tikzpicture}[scale =1, every node/.style={MyNode}]
    \node[draw=none, fill=none] (Y0) at (0, -1) {};
    \node[draw=none, fill=none] (Y0) at (0, 2.25) {};
    \node[rectangle, draw=none, fill=none]  at (0, 1.5) {$H$};
    \node[fill=black, inner sep=3, label={[below, yshift=-.3cm] $u$}] (X0) at (-1, 0) {};
    \node[fill=black, inner sep=3,  label={[below, yshift=-.3cm] $v$}] (X1) at (1, 0) {};
    \draw[MyBlueArc, arrHalfway] (X0) to (X1);
    \draw[MyRedArc, arrHalfwayLoop] (X1) to [out=65,in=115,looseness=30] (X1);
\end{tikzpicture}\hskip 1.75cm%
\begin{tikzpicture}[scale=1, every node/.style={MyNode}]
    \node[draw=none, fill=none] (Y0) at (0, -1) {};
    \node[draw=none, fill=none] (Y0) at (0, 2.25) {};
    \node[rectangle, draw=none, fill=none] (center) at (0,.5) {\Large$\longrightarrow$};
\end{tikzpicture}\hskip 2cm%
\begin{tikzpicture}[scale =1, every node/.style={MyNode}]
    \node[draw=none, fill=none] (Y0) at (0, -1) {};
    \node[draw=none, fill=none] (Y0) at (0, 2.25) {};
    \node[rectangle, draw=none, fill=none]  at (0, 2.25) {$G$};
    \node[fill=black, inner sep=3,  label={[below, yshift=-.15cm] $\varphi(u)$}] (Y0) at (-1, 0) {};
    \node[fill=black, inner sep=3,  label={[below, yshift=-.15cm] $\varphi(v)$}] (Y1) at (1, 0) {};
    \node (Y2) at (1, 1.75) {};
    \node (Y4) at (-1, 1.75) {};
    \draw[MyRedArc, arrHalfway] (Y1) to [bend left=18] (Y0);
    \draw[MyRedArc, arrHalfway] (Y0) -- (Y2);
    \draw[MyRedArc, arrHalfway] (Y2) -- (Y1);
    \draw[MyBlueArc, arrHalfway] (Y0) to [bend left=30] (Y4);
    \draw[MyBlueArc, arrHalfway] (Y4) to [bend left=30] (Y0);
    \draw[MyBlueArc, arrHalfway] (Y0) to [bend left=18] (Y1);
    \draw[thick] (Y0) to (Y4);
    \draw[thick] (Y2) -- (Y4);
\end{tikzpicture}
\vspace{-.5cm}
\caption{A depiction of an immersion $\varphi$ of $H$ into $G$.}
\label{fig:immersion}
\end{figure}
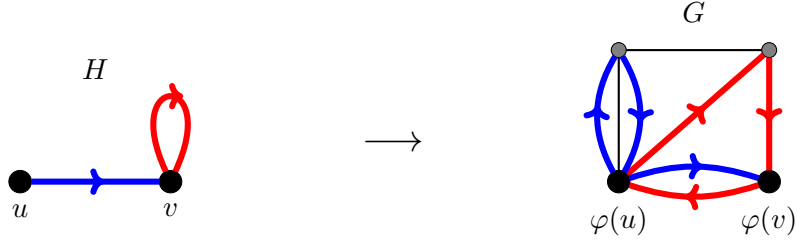


Equivalently, $(G, \gamma_G)$ admits $(H, \gamma_H)$ as an immersion if there exists a shifting $\gamma_G'$ of $\gamma_G$ so that $(H, \gamma_H)$ can be formed from a subgraph of $(G, \gamma_G')$ via repeatedly applying the following operation. Let $\vec{e_1}$ and $\vec{e_2}$ be oriented edges so that the head of $\vec{e_1}$ equals the tail of $\vec{e_2}$. Then to \emph{split off} $(\vec{e_1}, \vec{e_2})$, we delete the underlying edges $e_1$ and $e_2$ and add a new edge $e'$ whose ends are the tail of $\vec{e_1}$ and the head of $\vec{e_2}$; if $e'$ is oriented from the tail of $\vec{e_1}$ to the head of $\vec{e_2}$, then its label is the product of the labels of $\vec{e_1}$ and $\vec{e_2}$ (in that order). Note that splitting off $(\vec{e_1}, \vec{e_2})$ results in the same $\Gamma$-labeled graph as splitting off $(\vec{e_2}^{-1}, \vec{e_1}^{-1})$. Moreover, if $\varphi$ is an immersion of $(H, \gamma_H)$ into $(G, \gamma_G)$, then there exists a shifting $\gamma_G'$ of $\gamma_G$ so that $(H, \gamma_H)$ can be formed from a subgraph of $(G, \gamma_G')$ by splitting off the transitions of~$\varphi$.


We note that all of the definitions above also apply when $(H, \gamma_H)$ is labeled over a proper subgroup $\Gamma'$ of $\Gamma$, by just considering $(H, \gamma_H)$ to be $\Gamma$-labeled. The definitions are all well-behaved under shifting in $(H, \gamma_H)$; we just need to shift at the corresponding branch vertices of $\varphi$ in $(G, \gamma_G)$. Thus ``admitting an immersion'' is a transitive relation; if $(G, \gamma_G)$ admits an immersion of $(H, \gamma_H)$ and $(H, \gamma_H)$ admits an immersion of $(I, \gamma_I)$, then $(G, \gamma_G)$ also admits an immersion of~$(I, \gamma_I)$. 

\subsection{The structure theorem}
\label{subsec:structureThm}

We conclude this section by giving a precise statement of the structure theorem. First of all, a \textit{tree-cut decomposition} of a graph $G$ is a pair $(T, \mathscr{B})$ such that $T$ is a tree and $\mathscr{B} = (B_t : t \in V(T))$ is a collection of pairwise-disjoint (possibly empty) subsets of $V(G)$ whose union is $V(G)$. We call the sets in $\mathscr{B}$ the \textit{bags} of the decomposition. Roughly, the structure theorem says that each bag either 1) has a ``torso'' with few high degree vertices, or 2) has ``almost all of its edges'' labeled over a proper subgroup. We now state these outcomes more precisely.

So, let $G$ be a graph, and let $(T, \mathscr{B})$ be a tree-cut decomposition of $G$. Given a bag $B_t \in \mathscr{B}$, the \textit{torso} of $B$ is the graph $H$ obtained from $G$ as follows. Let $T_1, \dots, T_\ell$ be the components of $T \setminus t$. For each $i \in [\ell]$, let $V_i = \bigcup_{s \in V(T_i)} B_{s}$. Then $H$ is created by identifying each set $V_i$ to a single vertex, which we call a \emph{new vertex} of the torso. We keep parallel edges, but we remove any loops at the new vertices. 

Now, let $\Gamma$ be a group, let $(G, \gamma)$ be a $\Gamma$-labeled graph, and let $B\subseteq V(G)$. Then a \textit{certificate for $B$} is a tuple $(X,\gamma')$ so that $X$ is a set of edges of $G$ with $\delta(B) \subseteq X \subseteq \delta(B) \cup E(B)$ and $\gamma'$ is a shifting of $\gamma$ so that the $\gamma'$-labels of the edges in $E(B)\setminus X$ generate a proper subgroup of~$\Gamma$. The \emph{value} of a certificate is
\begin{align*}
    \val(X,\gamma') &= |\delta(B)| + 2|X \cap E(B)|.
\end{align*}This counts the edges of $X$, except edges with both ends in $B$ are counted twice. 

We are now ready to state the structure theorem. We state it for $2$-edge-connected graphs just to simplify things; a version for general graphs can be obtained by applying the theorem to each edge-block. We note that the \emph{degree} of a vertex $v$ is considered to be $|\delta(v)|$; we do not count loops but we do count parallel edges.

\begin{restatable}{thm}{maintheorem}
\label{thm:MainStructureThm}
    For any finite group $\Gamma$ and positive integers $k$ and $n$, there is an integer $t=4kn|\Gamma|^{6 + \lfloor\log_2|\Gamma|\rfloor}$ so that the following holds for any 2-edge-connected $\Gamma$-labeled graph $(G, \gamma)$ which forbids an immersion of some $n$-vertex $\Gamma$-labeled graph where each vertex is incident to at most $k$ edges. 
    
    There exist a shifting $\gamma'$ of $\gamma$ and a tree-cut decomposition $(T, \mathscr{B})$ of $G$ such that for every bag $B \in \mathscr{B}$, at least one of the following holds.
    \begin{enumerate}
        \item There are at most $n|\Gamma|$ vertices in the torso of $B$ of degree greater than $t$, and none of them are the new vertices of the torso.
        \item There exists $X \subseteq E(G)$ so that $(X,\gamma')$ is a certificate for $B$ of value at most~$t$.
    \end{enumerate}
\end{restatable}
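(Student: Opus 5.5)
I plan to follow the overall architecture of Wollan's structure theorem for unlabeled immersions, with a ``local structure theorem'' replacing the role of well-linked sets of high-degree vertices.

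\emph{Step 1: packing/covering.} First I will deduce, from Pap's packing/covering duality for non-$\Gamma'$ paths, an edge-disjoint version for circuits through a fixed vertex $v$. Applying this to the graph obtained by splitting a high-degree vertex gives the following dichotomy. Either there are many edge-disjoint circuits through $v$ whose labels lie outside a given proper subgroup $\Gamma'$. Or a bounded set of edges, after a suitable shifting, kills all such circuits, so that near $v$ everything is labeled in $\Gamma'$.

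\emph{Step 2: local structure.} Next I will prove a local structure theorem, the ``enriching flower'' statement of Section~\ref{sec:richFlower}. Suppose $(G,\gamma)$ contains a highly edge-connected set of many high-degree vertices, which I call a \emph{flower}. I will show that either we find an immersion of the $n$-vertex graph $H$, or there is a shifting and a bounded edge set $X$ so that the remaining edges of the ``region'' of the flower are labeled in a proper subgroup. The mechanism is iterative. Start with $\Gamma_0=\{1\}$ and use Step~1 repeatedly to enlarge the subgroup $\Gamma_i$ realized by many edge-disjoint circuits at each of $n|\Gamma|$ branch-vertex candidates. Each enlargement at least doubles $|\Gamma_i|$, which explains the $\lfloor\log_2|\Gamma|\rfloor$ in the exponent. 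If we reach $\Gamma$, then every vertex has rich circuits realizing every group element. We then route the edges of $H$ through the flower's edge-connectivity, and fix each label by concatenating an appropriate circuit. To keep the circuits and the routing trails edge-disjoint, I will use the uncrossing lemma for two immersions (Lemma~\ref{lem:uncrossingFlower}). If we get stuck at a proper subgroup, the covering outcome yields a certificate of bounded value.

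\emph{Step 3: global assembly.} Finally, I will combine these local certificates into a tree-cut decomposition. The plan is to take, for each ``tangle-like'' region of high-degree vertices, either a certificate or a bounded set of high-degree vertices. I will then uncross the cuts $\delta(B)$ of the certificates so that they become laminar (Lemma~\ref{lem:LaminarCertificates}), and read off a tree from the laminar family (Lemma~\ref{lem:decompFromContainers}). The shiftings chosen for different certificates must be made compatible into one global $\gamma'$. Because the certified sets are disjoint bags separated by small cuts, we can shift independently inside each bag, with only the bounded cut edges absorbed into $X$. Bags without a certificate must have at most $n|\Gamma|$ high-degree vertices in the torso, and no high-degree new vertex. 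This is arranged, as in Wollan's argument, by choosing the decomposition to minimize cut sizes using $2$-edge-connectivity and Menger's theorem.

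I expect the main obstacle to be Step~3, namely uncrossing the certificates while preserving bounded value and a single consistent shifting. Two crossing certificates $B_1,B_2$ may use different proper subgroups and incompatible shiftings. My first attempt would be submodularity of $|\delta(\cdot)|$ to replace them by $B_1\cap B_2$ and $B_1\cup B_2$ or by the differences. The proper subgroup on the union would come from the fact that both contain the same large flower, which forces their subgroups to agree up to conjugation. Value bounds then give the final $t=4kn|\Gamma|^{6+\lfloor\log_2|\Gamma|\rfloor}$.
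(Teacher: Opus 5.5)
Your Steps 1 and 2 and the bag-by-bag shifting in Step 3 follow the paper's architecture. The proof breaks at the step you flagged yourself: uncrossing the certificates.

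\textbf{The union step fails.} Replacing crossing $B_1,B_2$ by $B_1\cap B_2$ and $B_1\cup B_2$ cannot work, because certificate value is not submodular. Your justification, that a shared core forces the two subgroups to agree up to conjugation, is false. For example, let $\Gamma=\mathbb{Z}_2\times\mathbb{Z}_2=\{1,a,b,ab\}$. Take a $t$-core $S$ with all of $E(S)$ labeled $1$. Attach to $S$, by a few identity-labeled edges, two pieces:
\begin{enumerate}
\item $P_a$, labeled in $\{1,a\}$ and containing more than $t$ edge-disjoint cycles of label $a$;
\item $P_b$, labeled in $\{1,b\}$ and containing more than $t$ edge-disjoint cycles of label $b$.
\end{enumerate}
Then $S\cup P_a$ and $S\cup P_b$ have certificates of small value, using the non-conjugate subgroups $\langle a\rangle$ and $\langle b\rangle$. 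Now consider their union. Since $\Gamma$ is abelian, circuit labels are invariant under shifting, and every proper subgroup misses $a$ or $b$. So any certificate for the union must contain an edge from each of more than $t$ disjoint cycles, and $\val(S\cup P_a\cup P_b)>t$. Containers sharing a core can therefore use genuinely incompatible subgroups, and no union is ever available.

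\textbf{What the paper uses instead.} The paper never forms unions. It uses that $\val$ is \emph{posimodular}:
$\val(A)+\val(B)\ge\val(A\setminus B)+\val(B\setminus A)$.
This holds because $|\delta(\cdot)|$ is posimodular and the internal term can only drop when passing to a subset. That term is the minimum, over shiftings and proper subgroups, of the number of edges of $E(B)$ labeled outside the subgroup. So for crossing containers one replaces $B$ by $B\setminus C$ (or $C$ by $C\setminus B$) without increasing value. Every $t$-core in $B$ survives, since it is $(t+1)$-edge-connected while $|\delta(C)|\le t$, so it lies entirely in $C$ or in $B\setminus C$.

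Choosing the container system to minimize $\sum_{C\in\C}(|V(G)|+1)^{\val(C)}$, and then $\sum_{C\in\C}|C|$, gives two things:
\begin{enumerate}
\item The containers are pairwise disjoint.
\item They are \emph{refined}: some vertex of each $C$ is fully connected to $\delta(C)$. Otherwise $C$ could be replaced by the smaller-boundary sets $A_S$ around its cores, each of strictly smaller value.
\end{enumerate}
Your plan does not produce refinement, and Lemma~\ref{lem:decompFromContainers} needs it. Refinement is what makes contracting a container yield an immersion of $G$, so the inductive construction of the tree-cut decomposition creates no new large cores. With disjoint containers, no relation between the subgroups of different certificates is ever needed, and your bag-by-bag shifting goes through as in the paper.

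\textbf{A smaller point in Step 2.} Keep all circuits at the single branch center. Lemma~\ref{lem:uncrossingFlower} only handles circuits beginning at the branch vertex incident to every edge of $H$. This is why the paper does not maintain circuits at each of $n|\Gamma|$ vertices. Instead it records progress as a generating flower at the center, splicing a circuit of label $\alpha\notin\Gamma_i$ onto an identity-labeled petal trail. At the end it converts to a rich flower via Cayley-graph words of length at most $|\Gamma|$.
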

\noindent We note that the two outcomes guarantee that the tree-cut decomposition has bounded \emph{adhesion} (this is the largest edge-cut of $G$ corresponding to an edge of $T$; see \cite{WOLLAN2015} for a formal definition).

We also prove that an approximate converse of Theorem~\ref{thm:MainStructureThm} holds in Lemma~\ref{lem:convOfStructure}. That is, we prove that any graph with a tree-cut decomposition as in Theorem~\ref{thm:MainStructureThm} also forbids some $\Gamma$-labeled graph as an immersion.

\section{Packing circuits labeled outside of a subgroup}
\label{sec:packing}

In this section we obtain an Erd\H{o}s-P\'{o}sa property for edge-disjoint circuits which begin at a fixed vertex and are labeled outside of a fixed subgroup $\Gamma'$. Informally, a collection of objects ``has the Erd\H{o}s-P\'{o}sa property'' if they satisfy a rough duality between packing and covering. This property is named after the theorem of Erd\H{o}s and P\'{o}sa~\cite{EP1965} that every graph without $k$ vertex-disjoint cycles has a set of at most $\mathcal{O}(k \log k)$ vertices whose deletion yields an acyclic graph. We obtain our desired Erd\H{o}s-P\'{o}sa property as a corollary of a min-max theorem for ``non-returning'' vertex-disjoint paths by Pap~\cite{PapAPaths}, which reduces this setting to a problem of Gallai~\cite{Gallai}.

First of all, a \textit{circuit} is a trail whose head and tail are equal; we say that it \emph{begins} at that vertex. Let us consider how the label of a circuit $C$ changes under shifting. If $C$ begins at a vertex $x$, then shifting at any vertex besides $x$ does not change the label of $C$. Shifting at $x$ by a group element $\alpha$ conjugates the label of $C$ by $\alpha$. It is often convenient to change the starting vertex of a circuit. So if $C= \vec{e_1}\vec{e_2} \ldots \vec{e_n}$ is a circuit, then any circuit of the form $C'=\vec{e_i}\vec{e_{i+1}} \ldots \vec{e_n} \vec{e_1} \ldots \vec{e_{i-1}}$ is called a \emph{cyclic reordering} of $C$. Note that the labels of $C$ and $C'$ are also conjugate because one is of the form $\alpha\beta$ and the other is of the form $\beta\alpha$. We view paths as trails without repeated vertices; thus the \emph{label} of a path is the corresponding group element, and paths have a fixed beginning and end.


First we observe that the following result is a corollary of a theorem of Pap~\cite{PapAPaths}. We then obtain the desired result in Corollary~\ref{cor:EP}.


\begin{thm}[{\cite[Theorem~2.1]{PapAPaths}}]\label{thm:NonzeroAPathsEP}
    Let $\Gamma$ be a group, let $\Gamma'$ be a subgroup of $\Gamma$, and let $r\in \mathbb{Z}^+$. Then for any $\Gamma$-labeled graph $(G, \gamma)$ and set $A \subseteq V(G)$, at least one of the following holds.
    \begin{enumerate}
        \item There exist a collection of $r$ pairwise vertex-disjoint paths so that each path has both ends in~$A$ and has label outside of~$\Gamma'$.
        \item There exists a set $X \subseteq V(G)$ of size at most $2r-2$ so that $G \setminus X$ has no path which has both ends in $A$ and has label outside of~$\Gamma'$.
    \end{enumerate}
\end{thm}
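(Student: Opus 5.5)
The statement is cited as \cite[Theorem~2.1]{PapAPaths}, but the text says it is \emph{a corollary} of Pap's theorem. So the task is to derive it from Pap's min-max formula for non-returning $A$-paths in permutation-labeled graphs, and then to turn that formula into a hitting set.

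\textbf{Step 1: encode the labels as non-returning paths.} Let $\Omega$ be the set of left cosets $\Gamma/\Gamma'$, and let $\Gamma$ act on $\Omega$ by left multiplication. Give each oriented edge $\vec{e}$ the permutation of $\Omega$ induced by $\gamma(\vec{e})$, with the conventions set so that a path $P$ from $a$ to $b$ carries the permutation of $\gamma(P)$. In Pap's framework each vertex $a\in A$ has a designated point $\omega_a\in\Omega$. Set $\omega_a=\Gamma'$ for every $a$. Then $P$ is ``non-returning'' exactly when $\gamma(P)\Gamma'\neq\Gamma'$, that is, when $\gamma(P)\notin\Gamma'$. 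Reversing $P$ gives the inverse label, and $\gamma(P)^{-1}\in\Gamma'$ if and only if $\gamma(P)\in\Gamma'$, so the condition does not depend on the direction of $P$. Hence the non-returning $A$-paths are precisely the paths with both ends in $A$ and label outside $\Gamma'$.

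\textbf{Step 2: apply Pap's formula.} Pap's theorem gives a Gallai/Mader-type min-max. The maximum number $\nu$ of disjoint such paths equals the minimum, over a set $X\subseteq V(G)$ and a family of components $\mathcal{F}$ of $G\setminus X$ (or of a subgraph where the edges of some parts are deleted), of
\[
|X|+\sum_{K}\left\lfloor \tfrac{|\partial K|}{2}\right\rfloor .
\]
The minimizing structure has the property that every non-returning path either meets $X$ or passes between parts through the boundary vertices $\partial K$.

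\textbf{Step 3: extract the hitting set.} Suppose outcome 1 fails, so $\nu\le r-1$. Take a minimizing pair $(X,\mathcal{F})$ and set
\[
X' = X\cup\bigcup_{K}\partial K .
\]
Then $X'$ meets every non-returning $A$-path, because in Pap's structure any such path avoiding $X$ must use some $\partial K$. For each $K$,
\[
|\partial K|\le 2\left\lfloor \tfrac{|\partial K|}{2}\right\rfloor+1 .
\]
The odd terms are the problem. I would handle them in the standard Gallai way: a part with $|\partial K|=1$ carries no non-returning path through it, since any path entering $K$ must leave through the same vertex, so such parts can be dropped. Each remaining part has $|\partial K|\ge 2$, and then
\[
|\partial K|\le 2\left\lfloor \tfrac{|\partial K|}{2}\right\rfloor+1\le 3\left\lfloor \tfrac{|\partial K|}{2}\right\rfloor .
\]
This crude bound gives $|X'|\le 3\nu$. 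It is fine for this paper's purposes, but it is weaker than the stated $2r-2$.

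\textbf{Main obstacle.} Reaching exactly $2r-2$ is the delicate step. I would first use the refinement in Pap's (and Mader's) formula, in which the hitting set is formed from $X$ together with one vertex from each pair of boundary vertices, charging at most $2$ vertices per unit of $\nu$. That yields a hitting set of size at most $2\nu\le 2r-2$. Verifying that this choice still hits every non-returning path needs the exact form of Pap's structural description. If the verification fails, I would fall back on the $O(r)$ bound, which is all that the subsequent Corollary~\ref{cor:EP} needs.
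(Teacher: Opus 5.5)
\textbf{There is a genuine gap: you never prove the stated bound $2r-2$.} Your Step~1 encoding is fine, and the paper does the same thing (potentials in $\Gamma$, origin potential $\Gamma'$). The problem is Step~3. The crude set $X\cup\bigcup_K \partial K$ only gives about $3\nu$. The proposed refinement, taking one vertex from each pair of boundary vertices of a part $K$, is not a hitting set. For example, if $\partial K$ has four vertices joined pairwise by paths inside $K$, then any two chosen vertices leave an unmet path between the other two. It would also charge only one vertex per unit of $\nu$, not two.

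The fallback ``$O(r)$'' bound therefore proves a weaker theorem. It does not support the explicit constants used later, for instance $|X|\le 2r\le 2nk|\Gamma|$ in Lemma~\ref{lem:LocalStructureStep}.

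Step~2 is also too vague at a point Step~3 depends on. In Pap's formula the deleted edge set $F$ must be \emph{balanced}, so that no non-returning $A$-path consists only of edges of $F$. Without this, the formula itself is false: for a single non-returning edge $ab$ with $a,b\in A$, deleting $F=\{ab\}$ gives value $0$. Your claim that every path avoiding $X$ meets some $\partial K$ also needs this condition.

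\textbf{The fix: keep all but one vertex of each odd boundary, not half of it.} In any Gallai/Mader-type structure of this kind, a relevant path that traverses an edge inside a part $K$ has a maximal segment in $K$ whose two ends are \emph{distinct} vertices of $\partial K$. Each end is either an end of the path (in $A$) or adjacent along the path to a vertex outside $K$. So $\partial K$ with one vertex removed still meets that path. This gives a hitting set of size
\[
|X|+\sum_K 2\left\lfloor |\partial K|/2\right\rfloor\le 2\nu\le 2r-2 .
\]

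\textbf{How the paper avoids this bookkeeping.} It uses Pap's theorem only in its reduced form: the maximum number of disjoint non-returning $A$-paths equals the minimum, over balanced $F$, of the maximum number of disjoint $T$-paths in $G-F$, where $T=A\cup V(F)$. It then quotes Gallai's Erd\H{o}s--P\'osa theorem for plain $T$-paths: either $r$ disjoint $T$-paths or at most $2r-2$ vertices meeting all of them. The resulting $X$ meets every non-returning $A$-path $P$ of $G$. Since $F$ is balanced, $P$ contains an edge of $G-F$, hence a subpath in $G-F$ with both ends in $T$, hence a $T$-path of $G-F$.
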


To see that Theorem~\ref{thm:NonzeroAPathsEP} follows from~\cite{PapAPaths}, we let the ``set of potentials'' be $\Gamma$, and we let the ``potential of origin'' be $\Gamma'$ for every vertex in $A$. The theorem then tells us that the maximum number of vertex-disjoint paths with ends in $A$ and label outside of $\Gamma'$ is equal to the minimum over all ``balanced'' sets of edges $F$, of the maximum number of vertex-disjoint paths of $G-F$ whose ends are in $A \cup V(F)$. (We write $V(F)$ for the set of all vertices incident to an edge in $F$.) By a theorem of Gallai~\cite{Gallai}, either there exist $r$ such paths in $G-F$ (and therefore the first outcome of the theorem holds), or there exists a set $X$ of at most $2r-2$ vertices in $G-F$ so that $(G-F)\setminus X$ has no path with both ends in $A \cup V(F)$. In this latter case we again apply the min-max theorem of Pap~\cite{PapAPaths} to see that $G\setminus X$ has no path which has both ends in $A$ and has label outside of~$\Gamma'$, as desired.

We now derive the following analogous result for edge-disjoint circuits as a corollary of Theorem~\ref{thm:NonzeroAPathsEP}.

\begin{corollary}\label{cor:EP}
    Let $\Gamma$ be a group, let $\Gamma'$ be a subgroup of $\Gamma$, and let $r \in \mathbb{Z}^+$. Then for any $\Gamma$-labeled graph $(G, \gamma)$ and vertex $x$, at least one of the following holds.
    \begin{enumerate}
        \item There exist a collection of $r$ pairwise edge-disjoint circuits so that each circuit begins at $x$ and has label outside of~$\Gamma'$.
        \item There exist a set $X \subseteq E(G)$ of size at most $2r-2$ so that $G\setminus X$ has no circuit which begins at $x$ and has label outside of~$\Gamma'$.
    \end{enumerate}
\end{corollary}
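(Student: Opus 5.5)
We reduce edge-disjoint circuits through $x$ to vertex-disjoint $A$-paths in an auxiliary graph, in the standard way: pass to a line-graph-like construction so that vertex-disjointness there corresponds to edge-disjointness in $G$. Concretely, we build a $\Gamma$-labeled graph $G'$ as follows. For each edge $e$ of $G$ we create one vertex $w_e$. For each vertex $v\neq x$ of $G$ we add, between all pairs of edge-vertices $w_e,w_f$ with $e,f\in\delta(v)$ (or loops at $v$), a clique of connections representing transitions at $v$. Finally we let $A$ be the set of vertices $w_e$ for the edges $e$ incident to $x$. Labels are assigned so that the label of an $A$-path in $G'$ equals, up to the conventions at the ends, the label of the corresponding circuit at $x$.

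The labeling needs care because each $w_e$ corresponds to an unoriented edge. A cleaner construction is therefore to subdivide: replace each edge $e=uv$ by a path $u\,a_e\,b_e\,v$ carrying the label of $\vec e$ on the middle edge $a_eb_e$ and identity labels on the outer edges. We then replace each vertex $v\ne x$ by a clique on its incident half-edge vertices, with identity labels. At $x$ we instead keep the half-edge vertices as separate terminals forming $A$, deleting $x$ itself. A loop at $x$ simply becomes a single $A$-path $a_eb_e$.

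With this construction, a path in $G'$ with both ends in $A$ traverses a sequence of middle edges, and hence distinct edges of $G$. Between consecutive middle edges it passes through a clique at a vertex of $G$, which gives a trail in $G$ from $x$ to $x$, i.e.\ a circuit beginning at $x$, with the same label. Note that we must check that such a path cannot revisit $x$ in the middle; if it does, the clique structure at $x$ is absent, so the path cannot continue through $x$, and this is fine. Conversely, every circuit at $x$ that visits $x$ only at its ends yields such a path. Vertex-disjointness in $G'$ implies edge-disjointness in $G$ because each edge owns the vertices $a_e,b_e$. In the other direction, edge-disjoint trails pass through a given clique using distinct half-edge vertices, so they remain vertex-disjoint as well.

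We now apply Theorem~\ref{thm:NonzeroAPathsEP}. In the first outcome, $r$ disjoint $A$-paths give $r$ edge-disjoint circuits at $x$ with labels outside $\Gamma'$. In the second outcome we obtain a set $X'$ of at most $2r-2$ vertices of $G'$, and we take $X$ to be the set of edges $e$ of $G$ with $a_e$ or $b_e$ in $X'$, so that $|X|\le 2r-2$. Suppose some circuit beginning at $x$ with label outside $\Gamma'$ survives in $G\setminus X$. This is the step I expect to be the main obstacle, since such a circuit may revisit $x$. Cutting it at its visits to $x$ writes its label as a product of labels of subcircuits at $x$. Since the product lies outside $\Gamma'$, some factor is outside $\Gamma'$, because $\Gamma'$ is a subgroup. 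That subcircuit visits $x$ only at its ends and corresponds to an $A$-path in $G'\setminus X'$, contradicting the covering. The labeling orientation conventions at both ends must be checked, but they are routine.
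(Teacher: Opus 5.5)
Your proposal is correct and is essentially the paper's proof. The paper also passes to the line graph of the $1$-subdivision of $G$ (cliques $K_v$ with identity labels plus a matching carrying the labels of $G$) and applies Theorem~\ref{thm:NonzeroAPathsEP}. It then turns vertex-disjoint paths into edge-disjoint circuits, and a vertex cover of size at most $2r-2$ into an edge cover of at most the same size.

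The one difference is at $x$. The paper keeps the clique $K_x$ and takes $A=V(K_x)$, so a circuit that revisits $x$ maps to a single path running through $A$; this tacitly allows internal vertices in $A$. You instead delete $x$ so that the terminals have degree one, and cut the circuit at its returns to $x$, using that $\Gamma'$ is closed under products. This has the small advantage of working verbatim with the standard definition of $A$-paths, whose internal vertices avoid $A$.
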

\begin{proof}
    Let $H$ be the graph obtained from $G$ by ``separating'' each vertex $v$ of $G$ into a clique $K_v$ whose size is the number of edges incident to $v$ in $G$. Thus $H$ consists of pairwise vertex-disjoint cliques $(K_v: v \in V(G))$ and a perfect matching corresponding to $E(G)$. (Formally, $H$ is the line graph of the $1$-subdivision of $G$.) Let $\gamma_H$ be the $\Gamma$-labeling of $H$ where each oriented edge from the matching receives the same label as in $(G, \gamma)$, and the other edges (those in the cliques) receive label~$1$.



    Let $A$ be the vertices in $K_x$, and apply Theorem~\ref{thm:NonzeroAPathsEP} to $(H, \gamma_H)$. First suppose that there exist $r$ pairwise vertex-disjoint paths in $(H, \gamma_H)$ so that each path has both ends in $A$ and has label outside of $\Gamma'$. For each of these paths, consider the corresponding trail in $G$; as we walk along the path, each time a matching edge of $H$ is traversed, we append that oriented edge of $G$ to the trail. These trails are pairwise edge-disjoint, have labels outside of $\Gamma'$, and begin at $x$. So the first outcome of the corollary holds. 

    Thus we may assume that there exists a set $X \subseteq V(H)$ of size at most $2r-2$ so that $H\setminus X$ has no path which has both ends in $A$ and has label outside of $\Gamma'$. Let $X' \subseteq E(G)$ be the set of all edges of $G$ so that at least one end of the corresponding matching edge in $H$ is in $X$. Then $X'$ has size at most $2r-2$. Suppose towards a contradiction that $G\setminus X'$ has a circuit $C$ which begins at $x$ and has label outside of $\Gamma'$. Then $H\setminus X$ has a corresponding path with ends in $A$ and label outside of $\Gamma'$; the vertices of this path are the endpoints of the matching edges in $H$ corresponding to edges in $C$. This contradiction completes the proof.
\end{proof}

We need one final lemma to handle the second outcome of Corollary~\ref{cor:EP}. This lemma characterizes when there is no circuit which begins at a fixed vertex $x$ and is labeled outside of $\Gamma'$. An \textit{edge-block} of a graph is a maximal connected bridgeless subgraph. So an edge-block is either a single vertex or a maximal $2$-edge-connected subgraph. The edge-blocks of a graph partition its vertex set, so each vertex is in a unique edge-block.


\begin{lemma}
\label{lem:edgeBlock}
    Let $\Gamma$ be a group, let $\Gamma'$ be a subgroup of $\Gamma$, and let $(G, \gamma)$ be a $\Gamma$-labeled graph with a vertex $x$ so that there is no circuit which begins at $x$ and is labeled outside of $\Gamma'$. Then there exists a shifting $\gamma'$ of $\gamma$ obtained by shifting at vertices in $V(G)\setminus \{x\}$ so that every oriented edge $\vec{e}$ in the edge-block of $G$ containing $x$ has $\gamma'(\vec{e})\in \Gamma'$.
\end{lemma}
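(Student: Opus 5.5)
Let $B$ be the edge-block of $G$ containing $x$. If $B$ is the single vertex $x$ there is nothing to prove, except for loops at $x$; such a loop is itself a circuit beginning at $x$, so its label already lies in $\Gamma'$. So assume $B$ is $2$-edge-connected. The plan is the standard ``potential'' argument. Fix a spanning tree $T$ of $B$ rooted at $x$. For each vertex $v \in V(B)\setminus\{x\}$, shift at $v$ so that every oriented tree edge pointing away from the root gets label $1$. Process vertices in order of increasing depth: if the tree edge from the parent $u$ to $v$ has current label $\beta$, shift at $v$ by $\beta$, which makes its label $\beta\beta^{-1}=1$. This shift does not change tree edges that were already processed, since those do not touch $v$ except for this one. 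No shift is made at $x$, as the statement requires. Call the resulting labeling $\gamma'$. Shifting at vertices other than $x$ does not change the label of any circuit beginning at $x$, so the hypothesis still holds for $\gamma'$.

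Next, take any non-tree edge $e$ of $B$, oriented as $\vec e$ from $u$ to $w$; this includes loops, with $u=w$. We need $\gamma'(\vec e)\in\Gamma'$. Let $P_u$ and $P_w$ be the tree paths from $x$ to $u$ and to $w$; both have $\gamma'$-label $1$. The closed walk $P_u\,\vec e\,P_w^{-1}$ begins at $x$ and has label $\gamma'(\vec e)$, but it may repeat edges where $P_u$ and $P_w$ share an initial segment. So it is not in general a trail, and we cannot apply the hypothesis to it directly. This is the main obstacle, and the place where $2$-edge-connectivity of $B$ is needed.

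To get around it, use Menger's theorem in its edge version inside $B$, with $x$ on one side. Since $B$ is $2$-edge-connected, we can find a circuit through $x$ that uses $\vec e$. More concretely, subdivide $e$ with a new vertex $z$. In the subdivided graph $B'$ there are two edge-disjoint $x$–$z$ paths, and since $z$ has degree $2$ one of them ends with $u z$ and the other with $w z$. Their union gives a circuit $C$ that begins at $x$ and traverses $\vec e$ exactly once. However, its label is not obviously $\gamma'(\vec e)$, because non-tree edges other than $e$ may appear in $C$.

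So the plan is induction on the non-tree edges. Order the non-tree edges $e_1,\dots,e_m$ and show $\gamma'(\vec{e_i})\in\Gamma'$ for each. A cleaner route is the following. The subgroup generated by labels of circuits at $x$ lies in $\Gamma'$. Each non-tree edge label, conjugated appropriately, should be expressible as a product of labels of circuits at $x$ in $B$, and then it lies in $\Gamma'$. To realize this, take $C$ as above, written as $Q_1\,\vec e\,Q_2$. Let $C'$ be a second such circuit that agrees with $C$ except that it replaces $\vec e$ by the tree path route. Then $\gamma'(C)\gamma'(C')^{-1}$ is a conjugate of $\gamma'(\vec e)$ by $\gamma'(Q_1)$, and both circuit labels lie in $\Gamma'$. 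To make the conjugating element $\gamma'(Q_1)$ lie in $\Gamma'$ as well, use a circuit $C''$ beginning at $x$ that contains $Q_1$ followed by an edge-disjoint return to $x$. Then $\gamma'(Q_1)$ times the return label lies in $\Gamma'$, and we choose the return inside $T$ if possible. I expect that getting these edge-disjointness constraints right, using ear decompositions of the $2$-edge-connected block $B$ built outward from $x$, is where most of the work lies.

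As an alternative, build an ear decomposition of $B$ starting from a cycle through $x$. Then prove, by induction along the ears, that after suitable shifts at non-$x$ vertices every edge in the current subgraph has label in $\Gamma'$. Each new ear $R$ with ends $a$ and $b$ in the current subgraph $S$ can be closed into a circuit at $x$ as follows. $S$ is $2$-edge-connected and contains $x$, so there are edge-disjoint trails in $S$ from $x$ to $a$ and from $b$ to $x$. These trails have labels in $\Gamma'$ by induction. Hence the label of $R$ lies in $\Gamma'$, and shifting at the internal vertices of $R$ makes every edge of $R$ have label in $\Gamma'$ (the last edge absorbs the product). Finding those edge-disjoint trails in $S$ is again an application of Menger's theorem, by adding an auxiliary vertex joined to $a$ and $b$. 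This version seems cleanest, and I would pursue it.
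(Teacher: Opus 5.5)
The ear-decomposition argument you settle on at the end is correct and is essentially the paper's proof. Start from a cycle through $x$; for each new ear, shift at its internal vertices so that a single edge carries the ear's label, then close the ear into a circuit at $x$ using two edge-disjoint routes to $x$ inside the current $2$-edge-connected subgraph (your auxiliary-vertex Menger trick also covers closed ears with $a=b$), which forces that label into $\Gamma'$.
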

\begin{proof}
    Let $H$ be the edge-block of $G$ containing $x$. If $H$ is just $x$ then we are done. So we may assume that $H$ is $2$-edge-connected. Thus there exists a cycle $C$ in $H$ which contains $x$. We may shift at vertices in $V(G)\setminus \{x\}$ until all but one of the edges of $C$ is labeled by the identity. Since this shifting does not change the label of $C$, the other edge of $C$ is labeled by some element in $\Gamma'$. 
    
    Now, let $C=T_0, T_1, T_2, \ldots, T_k$ be an ear decomposition of a subgraph of $H$, and assume by induction that there is a shifting $\gamma'$ of $\gamma$ so that every edge in any of $T_0, T_1, T_2, \ldots, T_{k-1}$ is labeled in $\Gamma'$. That is, each $T_i$ for $i\in \{1,2,\ldots, k\}$ is either a path with two distinct ends both in $T_0, T_1, T_2, \ldots, T_{i-1}$, or a cycle which begins at a vertex in $T_0, T_1, T_2, \ldots, T_{i-1}$. In both cases, no internal vertex of $T_i$ (that is, no vertex of $T_i$ other than its ends) is in any of $T_0, T_1, T_2, \ldots, T_{i-1}$. Since $H$ is $2$-edge-connected, it has an ear decomposition; we can always add one additional path or cycle until all edges are covered. Thus it will complete the proof to show that there is a shifting $\gamma''$ of $\gamma'$ so that every edge in $T_0, T_1, T_2, \ldots, T_{k}$ is labeled in~$\Gamma'$.

    Let $\gamma''$ be the shifting of $\gamma'$ obtained by shifting at internal vertices of $T_k$ until at most one edge of $T_k$ has a non-identity label. Let $\alpha$ be the label of this final edge. We just need to show that $\alpha \in \Gamma'$. So, let $u$ and $v$ (possibly with $u=v$) be the ends of $T_k$. Since the union of $T_0, T_1, T_2, \ldots, T_{k-1}$ is $2$-edge-connected, we can find in this graph two edge-disjoint paths $R_u$ and $R_v$ so that their tails are $u$ and $v$, respectively, and their head is $x$. Then, up to symmetry between $u$ and $v$, we find that there exists a circuit which begins at $x$ and has label $\gamma''(R_u^{-1})\alpha \gamma''(R_v)$. Since this element is contained in $\Gamma'$ and so are the labels of $R_u$ and $R_v$, this means that $\alpha \in \Gamma'$, as desired.
\end{proof}
\section{Finding a rich flower immersion}
\label{sec:richFlower}

In this section, we first discuss the ``universal'' group-labeled graphs which we look for as immersions. Then we prove Theorem~\ref{thm:enrichingFlower}, which describes the ``local structure'' relative to an unlabeled flower immersion. For the following definitions, see Figure~\ref{fig:FlowerPetal}. 


For positive integers $k$ and $n$, we define the \textit{$(k,n)$-flower} to be the graph on $n+1$ vertices $x, y_1, y_2, ..., y_n$ so that for each $i \in [n]$, there are $k$ parallel edges between $x$ and $y_i$. We call $x$ the \textit{center vertex} of the flower. If $\varphi$ is an immersion of a flower into a graph $G$, then we call the branch vertex of $\varphi$ corresponding to the center of the flower the \emph{branch center}. A \emph{petal} is a subgraph of the flower consisting of $x$, some $y_i$, and the $k$ parallel edges between them. Note that if $H$ is a graph on $n$ vertices where each vertex is incident to at most $k$ edges, then the $(k,n)$-flower admits $H$ as an immersion; we map $V(H)$ to $\{y_1,y_2, \ldots, y_n\}$, and then we greedily map $\vec{E}(H)$ to two-edge trails of the flower.

\begin{figure}
\centering
\begin{tikzpicture}[scale=1, every node/.style={MyNode}]
    \def \rad {2}
    \def \start {-75}
    \foreach \i in {1, ..., 5}{
        \node (Y\i) at (\start+\i*55:\rad) {};
    }
    \node[label={[below, yshift=-.3cm] $x$}] (X) at (0,0) {};
    \foreach \i in {1, ..., 5}{
        \draw[thick] (X) to [bend left=30] (Y\i);
        \draw[thick] (X) to (Y\i);
        \draw[thick] (X) to [bend right=30] (Y\i);
    }
    \draw[MyRedArc] (X) to [bend left=30] (Y3);
    \draw[MyRedArc] (X) to (Y3);
    \draw[MyRedArc] (X) to [bend right=30] (Y3);
    \node[inner sep=3, fill=red] (X) at (0,0) {};
    \node[inner sep=3, fill=red] (Y3) at (\start+3*55:\rad) {};
\end{tikzpicture}
\caption{A $(3,5)$-flower with one petal highlighted in bold red.}
\label{fig:FlowerPetal}
\end{figure}
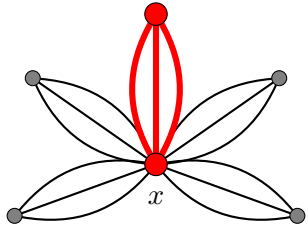

Let $\Gamma$ be a finite group. The \textit{$(\Gamma, k, n)$-rich flower} is the $\Gamma$-labeled graph where the graph is the $(|\Gamma|k, n)$-flower and, for all $i \in [n]$ and $\alpha \in \Gamma$, there exist $k$ oriented edges from $x$ to $y_i$ with label $\alpha$. Note that if $(H, \gamma_H)$ is a $\Gamma$-labeled graph on $n$ vertices where each vertex is incident to at most $k$ edges, then the $(\Gamma, k, n)$-rich flower admits $(H, \gamma_H)$ as an immersion. To see this, first map $V(H)$ to $\{y_1, y_2, \ldots, y_n\}$ as before. Then select one arbitrary orientation $\vec{e}$ of each edge $e$ of $H$, and map $\vec{e}$ to a two-edge trail which begins with an edge labeled by the identity and oriented towards the center, and ends with a $\gamma_H(\vec{e})$-labeled edge oriented away from the center.

Finally, we need one more type of flower in order to slowly build up a $(\Gamma, k, n)$-rich flower. When we actually find this type of flower as an immersion, it will be for larger and larger subgroups $\Gamma'$ of $\Gamma$. So, we say that a $\Gamma$-labeled graph is a \emph{$(\Gamma, k, n)$-generating flower} if there exists a generating set $S$ for $\Gamma$ so that $1 \in S$, the graph is an $(|S|k, n)$-flower, and for all $i \in [n]$ and $\alpha \in S$, there exist $k$ oriented edges from $x$ to $y_i$ with label $\alpha$. We call $S$ the \emph{generator}. 

Now we show that when $k$ is sufficiently large, every $(\Gamma, k, n)$-generating flower is ``universal''.

\begin{lemma}
\label{lem:genFlowUniversal}
    Let $\Gamma$ be a finite group and $k$ and $n$ be positive integers. Then any $(\Gamma, k|\Gamma|^2, n)$-generating flower admits an immersion of the $(\Gamma, k, n)$-rich flower.
\end{lemma}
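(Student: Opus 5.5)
The plan is to build the immersion petal by petal, using the identity map on vertices: send the center to the center and $y_i$ to $y_i$. Each edge of the rich flower from $x$ to $y_i$ with label $\alpha$ is sent to a trail inside the $i$-th petal of the generating flower that goes from $x$ to $y_i$, zig-zagging between $x$ and $y_i$. No shifting is needed. The only real work is a counting argument showing that each petal has enough edges of each label so that all these trails can be chosen edge-disjoint.

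\textbf{Step 1 (short words).} Let $S$ be the generator, with $1\in S$. Since $\Gamma$ is finite, each $s^{-1}$ is a positive power of $s$, so $S$ generates $\Gamma$ as a monoid. Hence every $\alpha\in\Gamma$ is a product $s_1s_2\cdots s_m$ of elements of $S$. Take a shortest such word. Its length satisfies $m\le|\Gamma|-1$, because the prefixes $1, s_1, s_1s_2,\dots$ are pairwise distinct (otherwise the word could be shortened), so this is a path in the Cayley digraph. For $\alpha\neq 1$ we have $m\ge1$, and no $s_j$ equals $1$, since deleting it would shorten the word. For $\alpha=1$ we take the one-letter word $1$.

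\textbf{Step 2 (the trails).} For a word $s_1\cdots s_m$ as in Step 1 and a petal $i$, consider the trail
\[
x \xrightarrow{s_1} y_i \xrightarrow{1} x \xrightarrow{s_2} y_i \xrightarrow{1} \cdots \xrightarrow{s_m} y_i .
\]
Each step $x\to y_i$ uses an edge oriented from $x$ to $y_i$ with label $s_j$. Each return step $y_i\to x$ traverses an identity-labeled edge in reverse, which still has label $1^{-1}=1$. The trail therefore has label $s_1\cdots s_m=\alpha$. For each $i$ and each $\alpha$ we need $k$ such trails, one for each of the $k$ edges of label $\alpha$ from $x$ to $y_i$ in the rich flower.

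\textbf{Step 3 (counting).} The main check is that the generating flower's petal, which has $k|\Gamma|^2$ edges of each label in $S$, suffices to make all these trails edge-disjoint. Fix a label $s\in S\setminus\{1\}$. Each trail uses at most $|\Gamma|-1$ edges of label $s$. There are $|\Gamma|\cdot k$ trails in the petal, so the total demand is at most $k|\Gamma|(|\Gamma|-1)\le k|\Gamma|^2$.

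For the label $1$, a trail for $\alpha\neq1$ uses $1$ only on its $m-1\le|\Gamma|-2$ return edges; this relies on the shortest word containing no letter $1$. The trails for $\alpha=1$ use exactly one identity edge each. So the total demand for label $1$ is at most $k(|\Gamma|-1)(|\Gamma|-2)+k\le k|\Gamma|^2$.

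Given these bounds, we assign distinct edges greedily. Distinct petals are edge-disjoint, so the resulting map is an immersion.

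The main obstacle I expect is exactly this accounting for the identity label, which does double duty: it serves as a return edge in every trail and also as a possible letter of the word. Choosing shortest words, which never contain $1$ when $\alpha\neq 1$, is what keeps the demand within $k|\Gamma|^2$.
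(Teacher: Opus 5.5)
Your proof is correct and follows essentially the same approach as the paper. The paper likewise takes short words over $S\setminus\{1\}$, read off from directed paths in the Cayley digraph. It builds the same zig-zag trails inside each petal, with identity-labeled return edges, and counts per label against the $k|\Gamma|^2$ available edges. Your separate accounting for the identity label, and your remark that finiteness makes $S$ generate $\Gamma$ as a monoid, just make explicit what the paper states briefly.
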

\begin{proof}
    Let $(G, \gamma)$ be a $(\Gamma, k|\Gamma|^2, n)$-generating flower, and let $S$ be its generator.

    Consider the Cayley graph of $\Gamma$ with respect to $S$; this is the directed graph with vertex-set $\Gamma$ so that for every $g \in \Gamma$ and $\alpha \in S$, there is an arc from $g$ to $g\alpha$. Since $S$ is a generating set, every element of $\Gamma$ can be written as some word $\alpha_1\alpha_2 \ldots \alpha_\ell$ where $\alpha_1,\alpha_2, \ldots, \alpha_\ell \in S$. Then the corresponding arcs form a walk from the vertex $1$ to the group element $\alpha_1\alpha_2 \ldots \alpha_\ell$ that we started with. This walk contains a directed path, and so every $g \in \Gamma$ can be written as a word over $S$ of length at most~$|\Gamma|$.

    We form the immersion of the $(\Gamma, k, n)$-rich flower into $(G, \gamma)$ as follows. Fix $i \in [n]$ and consider the petal of $(G, \gamma)$ on $y_i$ and the center $x$. For each $g \in \Gamma$, we wish to use this petal to create $k$ pairwise edge-disjoint trails from $x$ to $y_i$ of label $g$. We now describe how to create one such trail; we then repeat the process $k$ total times to create the others. To create one trail of label~$1$, we just use one edge of label~$1$. So, suppose we wish to create a trail of label $g$ for some $g \in \Gamma\setminus \{1\}$. Let $\alpha_1,\alpha_2 \ldots, \alpha_\ell \in S \setminus \{1\}$ be such that $\ell \leq |\Gamma|$ and $\alpha_1\alpha_2 \ldots \alpha_\ell=g$. Begin the trail at the center $x$, use an oriented edge of label $\alpha_1$ to get to $y_i$, then an edge of label~$1$ to return to $x$, and repeat this process for $\alpha_2, \ldots, \alpha_{\ell-1}$. Then finally use one oriented edge of label $\alpha_\ell$ to return to~$y_i$. 
    
    There are enough oriented edges since, when creating $|\Gamma|$-many trails (with one trail for each element of $\Gamma$),  each $\alpha \in S$ is used at most $|\Gamma|^2$ times. 
\end{proof}


The rest of this section is dedicated to understanding the ``local structure'' relative to an unlabeled flower immersion. We aim to find an increasing sequence of subgroups $\{1\}=\Gamma_0 \subsetneq \Gamma_1 \subsetneq \ldots \subsetneq \Gamma_{\ell}=\Gamma$ so that our $\Gamma$-labeled graph admits an immersion of a $(\Gamma_i, k_i, n_i)$-rich flower for each $i \in [\ell]$, where $k_0, k_1, \ldots, k_{\ell}$ and $n_0, n_1, \ldots, n_{\ell}$ are decreasing (but not too quickly). For the starting point, we require the following Ramsey-type lemma for flowers in group-labeled graphs.


\begin{lemma}
\label{lem:ZeroFlower}
    Let $\Gamma$ be a finite group, let $k$ and $n$ be positive integers, and let $(G, \gamma)$ be a $\Gamma$-labeled graph so that $G$ is a $(k|\Gamma|, n)$-flower. Then there is an immersion $\varphi$ of the $(\{1\}, k, n)$-rich flower into $(G, \gamma)$ so that the branch center is the center of~$G$.
\end{lemma}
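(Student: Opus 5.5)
Our plan is to shift at each non-center vertex $y_i$ of $G$ separately, choosing the shift so that at least $k$ of the $k|\Gamma|$ edges between $x$ and $y_i$ become labeled~$1$. After that, the immersion is simply the identity inclusion of a subgraph.

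Fix $i \in [n]$ and orient each of the $k|\Gamma|$ parallel edges between $x$ and $y_i$ from $x$ to $y_i$. The labels of these oriented edges lie in $\Gamma$, which has $|\Gamma|$ elements. By the pigeonhole principle, some $\beta_i \in \Gamma$ is the label of at least $k$ of them.

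Now shift by $\beta_i$ at $y_i$. The oriented edges $\vec{e}$ have head $y_i$ and tail $x \neq y_i$, so their new labels are $\gamma(\vec{e})\beta_i^{-1}$. In particular, each of the chosen $k$ edges now has label $\beta_i\beta_i^{-1}=1$. Shifting at $y_i$ affects only edges incident to $y_i$, and every edge of $G$ joins $x$ to some $y_j$ (the flower has no loops and no edges between distinct $y_j$). Hence the shifts at different $y_i$ do not interfere, and performing all of them gives a shifting $\gamma'$ of $\gamma$. We never shift at $x$.

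Define $\varphi$ by sending the center of the $(\{1\},k,n)$-rich flower to $x$ and its $i$-th outer vertex to $y_i$. The $(\{1\},k,n)$-rich flower is the $(k,n)$-flower with all labels equal to $1$. We map its $k$ edges between the center and the $i$-th outer vertex bijectively to the $k$ chosen edges between $x$ and $y_i$, each as a single-edge trail, and we map reverse orientations to reverse trails. These trails are edge-disjoint, their ends are correct, and their $\gamma'$-labels are $1$, matching the labels in the rich flower. So $\varphi$ is an immersion, and its branch center is the center $x$ of $G$.

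There is no real obstacle. The only point to check is that an edge from $x$ to $y_i$ is not a loop, so that the shift at $y_i$ acts by right multiplication and not by conjugation; this holds because $x \neq y_i$.
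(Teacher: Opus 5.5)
Your proposal is correct and follows the paper's own argument: pigeonhole gives a label $\beta_i$ appearing on at least $k$ of the edges oriented from $x$ to $y_i$, and shifting by $\beta_i$ at each $y_i$ turns these into identity-labeled edges. This makes the $(\{1\},k,n)$-rich flower a subgraph of $(G,\gamma')$. Your explicit check that the shifts at different $y_i$ do not interact, and that they act by right multiplication because $x\neq y_i$, is a detail the paper leaves implicit.
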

\begin{proof}
    Let $x$ be the center of $G$, and let $y_1, \dots, y_{n}$ be the other vertices of $G$. For each $i \in [n]$, let $\alpha_i \in \Gamma$ be the most common label of the edges oriented from $x$ to $y_i$. Note that there are at least $k$ such oriented edges of label $\alpha_i$. Now, let $\gamma'$ be the labeling obtained from $\gamma$ by, for each $i \in [n]$, shifting by $\alpha_i$ at $y_i$. (The shiftings may be performed in any order.) Thus, in $(G, \gamma')$, for each $i\in [n]$, there are at least $k$ edges between $x$ and $y_i$ of label~$1$. This yields the desired immersion; in fact the $(\{1\}, k, n)$-rich flower is a subgraph of $(G, \gamma')$.
\end{proof}

The following key lemma will be used to ``disentangle'' a collection of circuits found using Corollary~\ref{cor:EP} from a flower immersion. It is convenient to give a name to this collection of circuits. So, let $\Gamma$ be a group  and let $(G, \gamma)$ be a $\Gamma$-labeled graph. Given a vertex $x$ of $G$ and a subgroup $\Gamma'$ of $\Gamma$, a \emph{simple flower for $(x, \Gamma')$} is a collection $\C$ of pairwise edge-disjoint circuits so that each circuit in $\C$ begins at $x$ and is labeled outside of $\Gamma'$. 


We note that in the lemma below, when we count branch trails, we only count branch trails of the form $T$ and $T^{-1}$ once. When we apply the lemma, the graph $H$ will be a flower.


\begin{lemma}
\label{lem:uncrossingFlower}
    Let $\Gamma$ be a group, let $(G, \gamma)$ be a $\Gamma$-labeled graph, and let $\varphi$ be an immersion of a graph $H$ into $G$. Suppose that $H$ has a vertex which is incident to every edge of $H$, and let $x \in V(G)$ be the corresponding branch vertex of $\varphi$. Let $r$ be a positive integer and let $\Gamma'$ be a subgroup of $\Gamma$ so that $(G, \gamma)$ contains a simple flower $\C$ for $(x, \Gamma')$ of size $r$. Then we can choose $\C$ so that all but $2r$ of the branch trails of $\varphi$ are edge-disjoint from every circuit in~$\C$.
\end{lemma}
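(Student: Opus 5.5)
We choose $\C$ to be a simple flower for $(x,\Gamma')$ of size $r$ that minimizes the number of branch trails of $\varphi$ it meets. Equivalently, we take a minimizer of a lexicographic potential, first the number of branch trails hit and then the total number of edges used by $\C$. Suppose towards a contradiction that more than $2r$ branch trails meet $\bigcup \C$. The plan is to reroute one circuit so that it uses a branch trail instead, and to check that the rerouting strictly decreases the potential.

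Every branch trail $P=\varphi(\vec e)$ has one end at $x$, because every edge of $H$ is incident to the center vertex. We orient each branch trail so that it starts at $x$. A branch trail whose other end is also $x$ is a loop image; its underlying trail is itself a circuit at $x$. Consider a circuit $C\in\C$, written $C=\vec e_1\cdots\vec e_m$, and let $P$ be a branch trail that $C$ meets. Walk along $P$ from $x$ and stop at the first edge $f$ that lies in some circuit $C\in\C$. Let $P_0$ be the initial segment of $P$ before $f$; it is edge-disjoint from all of $\C$. Say $f$ is the $j$-th edge of $C$, traversed in some direction. The two candidate circuits are
\[
P_0\,\vec f\,\vec e_{j+1}\cdots\vec e_m \quad\text{and}\quad P_0\,\vec f^{-1}\,\vec e_{j-1}^{-1}\cdots\vec e_1^{-1},
\]
with orientation chosen according to how $P$ traverses $f$. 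Their labels are $\gamma(P_0\vec f)\gamma(C_{>j})$ and $\gamma(P_0\vec f)\gamma(C_{<j}\,\vec e_j)^{-1}$, where $C_{>j}=\vec e_{j+1}\cdots\vec e_m$ and $C_{<j}=\vec e_1\cdots\vec e_{j-1}$. Since $\gamma(C_{<j}\,\vec e_j)\gamma(C_{>j})=\gamma(C)\notin\Gamma'$, the two labels cannot both lie in $\Gamma'$. If both lay in $\Gamma'$, then $\gamma(C_{>j})^{-1}\gamma(C_{<j}\vec e_j)^{-1}=\gamma(C)^{-1}$ would be a product of elements of $\Gamma'$, which is false. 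So at least one replacement circuit $C'$ is labeled outside $\Gamma'$.

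Replacing $C$ by $C'$ keeps $\C$ a simple flower of size $r$. The new circuit $C'$ uses only edges of $C$ together with edges of $P_0$, and $P_0$ is disjoint from the other circuits. The difficulty is that $C'$ may meet new branch trails through $P_0$. It meets $P$ itself, and possibly other branch trails, because branch trails are only edge-disjoint from one another. To prevent this, the plan is to use the edge-disjointness of branch trails: $P_0$ is a subtrail of $P$, so $P_0$ meets no branch trail other than $P$. Hence the set of branch trails met by $C'$ is contained in (those met by $C$) $\cup\{P\}$.

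So the potential could fail to drop only because of $P$. To get a strict decrease, count branch trails met by each circuit. The circuit $C$ is split at $f$, and $C'$ keeps only one side of $C$. It therefore loses the trails met only on the discarded side, and it loses the discarded edges. The main obstacle is ensuring that this loss is strict. The counting of $2r$ should come from the following observation. Each circuit $C$ has at most two "essential" branch trails, namely those hit at its first and last edges. We would try to show that a minimal $\C$ meets each branch trail only through such essential positions, so each of the $r$ circuits touches at most $2$ branch trails, giving $2r$ in total. Concretely, if $C$ meets a third branch trail $P$, take $P$ and its first contact edge $f$ on $C$ as above. Among the two sides of $C$ at $f$, the one kept in $C'$ still meets $P$ only at $f$ together with $P_0\subseteq P$. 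Choosing the side with good label then drops at least one other trail, unless every other trail met by $C$ is met on both sides. We expect this to require a careful choice of $f$, such as the edge of $C$ closest to an end of $C$ among the edges of non-essential trails, plus tie-breaking by total edge count. This ensures every replacement strictly decreases (number of branch trails met, $\sum|E(C)|$), which contradicts minimality.
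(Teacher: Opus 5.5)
There is a genuine gap: the potential you minimize does not strictly decrease under your rerouting, and the proposal leaves that strictness as an expectation rather than proving it.

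Your rerouting move is the right one, and it is the paper's. You splice the initial segment $P_0$ of the offending branch trail onto one side of $C$ at the first contact edge $f$. Then $\gamma(C)\notin\Gamma'$ shows that one of the two resulting circuits at $x$ is labeled outside $\Gamma'$. Your observation that this move never enlarges the set of branch trails met by $\C$ is also correct. (A small slip: if $P$ traverses $f$ as $\vec e_j$, the second candidate is $P_0\vec e_{j-1}^{-1}\cdots\vec e_1^{-1}$, without $\vec f^{-1}$; your label formula is in fact the label of this trail.)

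Neither coordinate of (number of branch trails met, total number of edges of $\C$) is forced to drop. A branch trail may share many edges with $C$ spread along all of $C$, so the side you keep may still meet every trail that $C$ met, while $P$ stays met through $P_0\vec f$. Also, $P_0$ may be much longer than the discarded side, so the edge count can increase. You cannot pick the side either, since the label condition picks it. Your last paragraph only hopes that a careful choice of $f$ repairs this, so as written minimality yields no contradiction. Relatedly, the invariant to aim for is not that each circuit meets at most two trails. It is that every branch trail meeting $\C$ contains the first or last edge of some circuit; since branch trails are edge-disjoint, at most $2r$ trails can do so.

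The missing idea is to measure $\C$ by transitions. Among simple flowers of size $r$ whose circuits all begin at $x$ (this matters, since cyclic reordering changes a transition), minimize the number of transitions of circuits in $\C$ that are not transitions of $\varphi$. Let $P$ be a branch trail meeting $\C$ but containing no first or last edge of a circuit, and choose $f$ as you do. Replacing $C$ by $C^{-1}$ if necessary, write $C=C_1C_2$ where $\vec f$, oriented as in $P$, is the first edge of $C_2$. Then:
\begin{itemize}
\item $C_1$ is nonempty.
\item For the last edge $\vec g$ of $C_1$, the transition $(\vec g,\vec f)$ is not a transition of $\varphi$, since otherwise $g$ would precede $f$ on $P$.
\item $C_2$ contains another non-$\varphi$ transition. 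It starts on $P$ but ends with an edge not on $P$, and since branch trails are edge-disjoint, a chain of $\varphi$-transitions starting on $P$ stays on $P$.
\end{itemize}
Now $P_0C_2$ loses $(\vec g,\vec f)$ and gains only transitions of $P$. The circuit $C_1P_0^{-1}$ gains at most one non-$\varphi$ transition at the junction but loses at least two. If $P_0$ is empty, split $C$ at $x$ into $C_1$ and $C_2$ instead. Whichever circuit is labeled outside $\Gamma'$ strictly lowers the count, a contradiction. You could keep your number of met trails as a first coordinate, but the tie-breaker must be this transition count rather than the edge count.
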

\begin{proof}
    We choose $\C$ to minimize the number of transitions of trails in $\C$ which are not transitions of $\varphi$. (It is important that every circuit in $\C$ begins at $x$; cyclically reordering a circuit changes one of its transitions.) We claim that every branch trail of $\varphi$ which does not contain the first or last edge of a circuit in $\C$ is edge-disjoint from every circuit in $\C$. Proving this claim will complete the proof of the lemma.

    Suppose otherwise, and let $T$ be a branch trail of $\varphi$ which is a counterexample. We may assume that $T$ is oriented to start from $x$. Let $\vec{e}$ be the first oriented edge of $T$ whose underlying edge $e$ is in a circuit $C \in \C$. By possibly replacing $C$ with $C^{-1}$, we may assume that $\vec{e}$ is in $C$. Let $T_1, T_2, C_1, C_2$ be trails so that $T = T_1 T_2$, $C = C_1 C_2$, and $\vec{e}$ is the first oriented edge of both $T_2$ and $C_2$. Note that by assumption $C_1$ is nonempty. Let $\vec{f}$ denote the last oriented edge of~$C_1$. Thus $(\vec{f}, \vec{e})$ is a transition of $C$ that is not a transition of~$\varphi$.

    First suppose that $T_1$ is empty. So the tail of $\vec{e}$ is $x$. Since $\gamma(C_1)\gamma(C_2) = \gamma(C) \not\in \Gamma'$, at least one of $C_1, C_2$ is labeled outside of $\Gamma'$. Both $C_1$ and $C_2$ are circuits which have $x$ as their end and do not contain the transition $(\vec{f}, \vec{e})$. Thus replacing $C$ with the appropriate choice of $C_1, C_2$ contradicts the choice of $\C$. 
    
    
    Now we may assume that $T_1$ is nonempty. Note that both $C_1 T_1^{-1}$ and $T_1 C_2$ are circuits which have $x$ as their end and do not contain the transition $(\vec{f}, \vec{e})$. Also, since $\gamma(C_1 T_1^{-1})\gamma(T_1 C_2) = \gamma(C) \not\in\Gamma'$, at least one of $C_1 T_1^{-1}$ and $T_1 C_2$ is labeled outside of $\Gamma'$. Note that every transition of $T_1 C_2$ is also a transition of $T$ or $C$, including the transition ``between'' $T_1$ and $C_2$. Thus we may assume that the other circuit $C_1 T_1^{-1}$ is the one labeled outside of $\Gamma'$. To handle this case, note that $C_2$ contains some transition which is not a transition of $\varphi$ because $T$ does not contain the last oriented edge of $C_2$. Therefore replacing $C$ with $C_1 T_1^{-1}$ contradicts our choice of~$\C$. This completes the proof of Lemma~\ref{lem:uncrossingFlower}.
\end{proof}

Now we prove the final lemma of this section; it describes when we can change a flower immersion to be labeled over a larger subgroup.


\begin{lemma}
\label{lem:LocalStructureStep}
    Let $\Gamma$ be a finite group, let $\Gamma'$ be a proper subgroup of $\Gamma$, and let $\varphi$ be an immersion of a $(\Gamma', 2k|\Gamma|, 2n)$-generating flower into a $\Gamma$-labeled graph $(G, \gamma)$. Then at least one of the following holds.
    \begin{enumerate}
        \item There exists a subgroup $\Gamma''$ of $\Gamma$ which properly contains $\Gamma'$ such that $(G, \gamma)$ admits an immersion of a $(\Gamma'', k, n)$-generating flower which has the same branch center as~$\varphi$.
        \item There exist a shifting $\gamma'$ of $\gamma$ and a set $X \subseteq E(G)$ of size at most $2nk|\Gamma|$ so that the edge-block of $G \setminus X$ containing the branch center of $\varphi$ is labeled over~$\Gamma'$.
    \end{enumerate}
\end{lemma}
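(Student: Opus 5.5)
Let $x$ be the branch center of $\varphi$ and set $r = nk|\Gamma|$. The plan is to apply Corollary~\ref{cor:EP} to $(G,\gamma)$, the vertex $x$, the subgroup $\Gamma'$ and this $r$. If the second outcome holds, we get a set $X$ of at most $2r-2 \le 2nk|\Gamma|$ edges such that $G\setminus X$ has no circuit beginning at $x$ labeled outside $\Gamma'$. Lemma~\ref{lem:edgeBlock}, applied to $G\setminus X$, then gives a shifting (at vertices other than $x$) under which the edge-block of $G\setminus X$ containing $x$ is labeled over $\Gamma'$. Extending this shifting trivially to the edges of $X$ yields outcome~2.

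So assume there is a simple flower $\C$ for $(x,\Gamma')$ of size $r$. By Lemma~\ref{lem:uncrossingFlower} we may choose $\C$ so that all but $2r = 2nk|\Gamma|$ branch trails of $\varphi$ are edge-disjoint from $\bigcup\C$. The flower being immersed has $2n$ petals, each with $2k|\Gamma|\cdot|S|$ edges, where $S \ni 1$ is the generator. Call a petal \emph{damaged} if fewer than $k$ of its branch trails of some label $\alpha\in S$ are untouched by $\C$; a damaged petal loses more than $k|\Gamma|$ trails of that label.

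The main obstacle is distributing the circuits of $\C$ among the petals while keeping enough untouched trails, and the counting is tight. My first attempt is the following. Since at most $2r$ trails are touched, at most $2r/(k|\Gamma|) \le 2n/|\Gamma|\cdot|\Gamma| \cdot \ldots$ petals are damaged; I will tune the constants so that at least $n$ petals remain undamaged. For each undamaged petal, with branch vertex $y_i$, we assign $k$ circuits of $\C$ to it; this needs $nk \le r$ circuits. We then augment the petal with $k$ new trails from $x$ to $y_i$, each of the form $C\,T$, where $C\in\C$ is an assigned circuit and $T$ is an untouched identity-labeled branch trail from $x$ to $y_i$. The label of $C\,T$ is $\gamma(C)\notin\Gamma'$.

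Since circuits beginning at $x$ change only by conjugation when we shift at $x$, and we never shift at $x$, all labels remain valid. The one point that needs care is that different petals would receive different labels $\gamma(C)$. To handle this, we pigeonhole: using a factor of $|\Gamma|$ extra circuits (which is why $r$ includes $|\Gamma|$), we choose, for every retained petal, $k$ circuits with a common label $\beta\notin\Gamma'$. We then set $\Gamma'' = \langle\Gamma',\beta\rangle$ and take generator $S\cup\{\beta\}$.

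Each retained petal needs $k$ untouched trails per label in $S$, plus $k$ further identity-labeled trails to concatenate with the assigned circuits; the factor $2$ in $2k|\Gamma|$ supplies these. All the trails are edge-disjoint because $\C$ avoids the untouched trails. Restricting to $n$ petals therefore gives a $(\Gamma'',k,n)$-generating flower with branch center $x$, which is outcome~1.

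If the count of damaged petals turns out too large, the fallback is to choose $r$ smaller, namely $nk$ times the number of group labels, and rely on the $2n$ petals together with the factor $2k$ slack. This would still keep $|X|\le 2nk|\Gamma|$ in outcome~2.
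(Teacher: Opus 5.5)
\textbf{There is a genuine gap in the counting step.} Your overall structure matches the paper's. For the covering outcome you use Corollary~\ref{cor:EP} and Lemma~\ref{lem:edgeBlock}. For the packing outcome you use Lemma~\ref{lem:uncrossingFlower}, sacrifice petals, pigeonhole a common label $\beta\notin\Gamma'$, and concatenate circuits with identity-labeled branch trails. But you leave the counting unfinished (``I will tune the constants''), and with your choice $r=nk|\Gamma|$ it actually fails.

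Lemma~\ref{lem:uncrossingFlower} only guarantees that at most $2r=2nk|\Gamma|$ branch trails meet $\bigcup\C$, and nothing prevents these from being concentrated. Each retained petal needs $2k$ untouched identity-labeled trails: $k$ for the label $1\in S$ and $k$ to concatenate with circuits. A petal has only $2k|\Gamma|$ identity trails, so $2k(|\Gamma|-1)+1$ touched ones already ruin it. For $|\Gamma|=2$, $k=1$ and $n\ge 3$, the $4nk$ touched trails can ruin $\lfloor 4nk/(2k+1)\rfloor>n$ of the $2n$ petals. Your own estimate $2r/(k|\Gamma|)=2n$ shows this: it bounds nothing. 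Your damage threshold of ``fewer than $k$'' untouched trails also does not match the $2k$ identity trails you later need.

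\textbf{How the paper closes it.}
\begin{enumerate}
\item Use only $r=nk(|\Gamma|-1)$ circuits when invoking Lemma~\ref{lem:uncrossingFlower}. This still suffices for the pigeonhole: since $1\in\Gamma'$, we have $|\Gamma\setminus\Gamma'|\le|\Gamma|-1$, so some $\beta\notin\Gamma'$ labels at least $nk$ of the circuits.
\item Discard the $n$ petals with the most touched trails. If a remaining petal had more than $2r/n=2k(|\Gamma|-1)$ touched trails, so would every discarded petal, giving more than $2r$ in total.
\item Hence each remaining petal keeps at least $2k|\Gamma|-2k(|\Gamma|-1)=2k$ untouched trails of every label in $S$, exactly the amount needed.
\end{enumerate}
The pigeonhole must come after the uncrossing, as you have it, because Lemma~\ref{lem:uncrossingFlower} changes the circuits and their labels.

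\textbf{A smaller point.} $\Gamma'$ need not be normal. So first replace $\gamma$ by the shifting in which the branch trails of $\varphi$ carry the flower's labels, and apply Corollary~\ref{cor:EP} to that labeling. Both outcomes are invariant under shifting, so this loses nothing. Otherwise a circuit labeled outside $\Gamma'$ under $\gamma$ can be labeled inside $\Gamma'$ in the labeling realizing the flower, since the two differ by a conjugation at $x$.
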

\begin{proof}
    Since both of the desired outcomes are invariant under shifting $\gamma$, we may assume that we have already shifted so that each edge of the flower yields a branch trail of the correct label in $(G, \gamma)$. For convenience, let $x \in V(G)$ denote the branch center of $\varphi$ and set $r = nk(|\Gamma|-1)$. We apply Corollary~\ref{cor:EP}.
    
    First suppose that there exists a collection of $r$ pairwise edge-disjoint circuits so that each circuit begins at $x$ and has label outside of $\Gamma'$. This is a simple flower for $(x, \Gamma')$ of size $r$. Thus, by Lemma~\ref{lem:uncrossingFlower}, there exists a simple flower $\C$ of size $r$ so that all but $2r$ of the branch trails of $\varphi$ are edge-disjoint from every circuit in $\C$. 
    
    We then delete the $n$ petals with the largest number of branch trails sharing edges with circuits in $\C$. Each petal now has at most $2r/n$ branch trails which intersect circuits in $\C$. Thus after sacrificing $n$ petals and $2r/n=2k(|\Gamma|-1)$ branch paths on the remaining petals, the $r$ circuits in $\C$ are edge-disjoint from the remaining $(\Gamma', 2k, n)$-generating flower immersion. Thus, since $r=nk(|\Gamma|-1)$, we can take $nk$ of these circuits in $\C$ which have the same label $\alpha \in \Gamma \setminus \Gamma'$. Let $\Gamma''$ be the subgroup generated by $\Gamma' \cup \{\alpha\}$. We can further sacrifice $k$ of the trails with identity label on each petal to add $k$ trails with label $\alpha$ for each petal of the immersion, resulting in a $(\Gamma'', k, n)$-generating flower immersion with the same branch center as~$\varphi$.

    For the final case, suppose that the other outcome of Corollary~\ref{cor:EP} holds. So there exists a set $X \subseteq E(G)$ of size at most $2r\leq 2nk|\Gamma|$ so that $G\setminus X$ has no circuit which begins at $x$ and has label outside of $\Gamma'$. Then by Lemma~\ref{lem:edgeBlock}, there exists a shifting $\gamma'$ of $\gamma$ which labels every oriented edge in the edge-block of $G\setminus X$ containing $x$ within $\Gamma'$. Then the second outcome of the lemma holds.
\end{proof}

We are ready to prove the main result of this section. It is obtained by iteratively applying Lemma~\ref{lem:LocalStructureStep} until we have found a generating flower immersion labeled over as large a subgroup as possible. 

\begin{thm}
\label{thm:enrichingFlower}
    For any finite group $\Gamma$ and positive integers $k$ and $n$, there exist integers $k' = k|\Gamma|^{4 + \lfloor\log_2 |\Gamma|\rfloor}$ and $n' = n|\Gamma|$ so that for any $\Gamma$-labeled graph $(G, \gamma)$ and immersion $\varphi$ of the $(k', n')$-flower into $G$, at least one of the following holds.
    \begin{enumerate}
        \item The graph $(G, \gamma)$ admits an immersion of the $(\Gamma, k, n)$-rich flower which has the same branch center as~$\varphi$.
        \item\label{itm:EF2} There exist a proper subgroup $\Gamma'$ of $\Gamma$, a shifting $\gamma'$ of $\gamma$, and a set $X \subseteq E(G)$ of size at most $2n'k'|\Gamma|$ such that the edge-block of $G\setminus X$ containing the branch center of $\varphi$ is labeled over $\Gamma'$.
    \end{enumerate}
\end{thm}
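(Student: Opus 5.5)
We iterate Lemma~\ref{lem:LocalStructureStep} along a strictly increasing chain of subgroups, starting from the trivial subgroup via Lemma~\ref{lem:ZeroFlower}, and finish with Lemma~\ref{lem:genFlowUniversal}.

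Let $\ell = \lfloor \log_2|\Gamma|\rfloor$. Any strictly increasing chain $\{1\}=\Gamma_0\subsetneq\Gamma_1\subsetneq\cdots$ of subgroups of $\Gamma$ has length at most $\ell$, since each index is at least $2$ by Lagrange's theorem. Define parameters backwards.
\begin{itemize}
\item Let $k_\ell = k|\Gamma|^2$ and $n_\ell = n$.
\item For $i<\ell$, set $k_{i}=2k_{i+1}|\Gamma|$ and $n_{i}=2n_{i+1}$.
\end{itemize}
Then $k_0 = 2^\ell|\Gamma|^\ell\cdot k|\Gamma|^2 \le k|\Gamma|^{2+2\ell}$, which is not quite the stated exponent $4+\ell$. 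Likewise $n_0 = 2^\ell n\le n|\Gamma|$, which matches $n'$. To get the exponent down, I would note that the loss $2|\Gamma|$ per step can be improved. The factor $2$ comes from sacrificing half the petals and trails, and the factor $|\Gamma|$ comes from pigeonholing the label $\alpha$. One should compare $|\Gamma|^\ell$ with $2^\ell$, and absorb the $2^\ell\le|\Gamma|$ factor: $k_0\le k|\Gamma|^{2}\cdot|\Gamma|^{\ell}\cdot 2^\ell\le k|\Gamma|^{3+\ell}$. Lemma~\ref{lem:ZeroFlower} costs one further factor $|\Gamma|$, giving $k'=k_0|\Gamma|\le k|\Gamma|^{4+\ell}$, as stated.

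\textbf{Start.} Restrict $\varphi$ to a $(k_0|\Gamma|,n_0)$-subflower. Composing the immersion of Lemma~\ref{lem:ZeroFlower} with $\varphi$ gives the following: the flower, labeled via the branch-trail labels, contains the $(\{1\},k_0,n_0)$-rich flower with the same center. This is a $(\{1\},k_0,n_0)$-generating flower with $S=\{1\}$. Transitivity of immersions handles the composition, and shifting at the branch vertices handles the labels.

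\textbf{Induction.} Suppose we have an immersion of a $(\Gamma_i,k_i',n_i')$-generating flower with the same branch center $x$, where $\Gamma_i$ has chain-length $i$ and $k_i'\ge k_{i}$, $n_i'\ge n_{i}$ up to the indexing. If $\Gamma_i=\Gamma$, apply Lemma~\ref{lem:genFlowUniversal} to obtain outcome~1; this needs $k_i'\ge k|\Gamma|^2$, which holds since $k_i'\ge k_\ell$. Otherwise apply Lemma~\ref{lem:LocalStructureStep}. This either yields a generating flower over a strictly larger $\Gamma_{i+1}$ with parameters halved and divided by $|\Gamma|$ and the same branch center, or yields outcome~2 with $\Gamma'=\Gamma_i$ proper. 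In the latter case $|X|\le 2n_i' k_i'|\Gamma|/(2|\Gamma|)\cdot\ldots\le 2n'k'|\Gamma|$, since $n_i'\le n'$ and $k_i'\le k'$. Because the chain length is at most $\ell$, the process terminates.

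\textbf{Bookkeeping.} We need one small observation: a generating flower with larger parameters contains one with smaller parameters. This holds because we may drop petals and edges while keeping at least $k$ edges per generator label.

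The main obstacle is purely arithmetic: making the constants fit exactly $k|\Gamma|^{4+\lfloor\log_2|\Gamma|\rfloor}$ and $n|\Gamma|$, including the $2^\ell\le|\Gamma|$ absorption. Everything structural is already provided by Lemmas~\ref{lem:ZeroFlower}, \ref{lem:LocalStructureStep} and~\ref{lem:genFlowUniversal}.
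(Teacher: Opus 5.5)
Your proposal is correct and follows the paper's proof exactly: you use Lemma~\ref{lem:ZeroFlower} to get a $(\{1\},k|\Gamma|^{3+\lfloor\log_2|\Gamma|\rfloor},n|\Gamma|)$-rich (hence generating) flower, then at most $\lfloor\log_2|\Gamma|\rfloor$ applications of Lemma~\ref{lem:LocalStructureStep} along a strictly increasing subgroup chain, and Lemma~\ref{lem:genFlowUniversal} at the end. The only blemish is your aside that the per-step loss of $2|\Gamma|$ ``can be improved''; nothing needs improving, since your subsequent observation $(2|\Gamma|)^{\ell}\le|\Gamma|^{1+\ell}$ (from $2^\ell\le|\Gamma|$) is precisely the paper's accounting.
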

\begin{proof}
    Suppose that the second outcome does not hold. For convenience, let $x \in V(G)$ denote the branch center of $\varphi$. By Lemma~\ref{lem:ZeroFlower} and the transitivity of immersions, $(G, \gamma)$ admits an immersion of the $(\{1\}, k|\Gamma|^3 \cdot |\Gamma|^{\lfloor\log_2 |\Gamma|\rfloor}, n|\Gamma|)$-rich flower with branch center $x$. Note that this also yields an immersion of the $(\{1\}, k |\Gamma|^2 \cdot (2|\Gamma|)^{\lfloor\log_2 |\Gamma|\rfloor}, n2^{\log_2|\Gamma|})$-generating flower. By repeatedly applying Lemma~\ref{lem:LocalStructureStep}, we obtain an immersion of a $(\Gamma, k|\Gamma|^2, n)$-generating flower through a sequence of subgroups $\{1\} =\Gamma_0 \subsetneq \Gamma_1 \subsetneq \dots \subsetneq \Gamma_\ell = \Gamma$. Note that $|\Gamma| = \prod_{i=1}^\ell |\Gamma_i| / |\Gamma_{i-1}| \geq 2^\ell$, so $\ell \leq \lfloor\log_2|\Gamma|\rfloor$ (in fact $\ell$ is at most the number of prime divisors of $|\Gamma|$). Finally, we obtain a $(\Gamma, k, n)$-rich flower as an immersion of our $(\Gamma, k|\Gamma|^2, n)$-generating flower by applying Lemma~\ref{lem:genFlowUniversal}. 
\end{proof}

\section{Proving the structure theorem}
\label{sec:strThm}

This section is dedicated to proving the main structure theorem (Theorem~\ref{thm:MainStructureThm}) and its partial converse. We begin by proving the partial converse stated below.

\begin{lemma}
\label{lem:convOfStructure}
    Let $\Gamma$ be a group, let $t$ and $n$ be positive integers, and let $(G, \gamma)$ be a $\Gamma$-labeled graph so that there exists a shifting $\gamma'$ of $\gamma$ and a tree-cut decomposition $(T, \mathscr{B})$ of $G$ such that for every bag $B \in \mathscr{B}$, at least one of the following holds.
    \begin{enumerate}
        \item There are at most $n$ vertices in the torso of $B$ of degree greater than $t$, and none of them are the new vertices of the torso.
        \item There exists $X \subseteq E(G)$ so that $(X,\gamma')$ is a certificate for $B$ of value at most~$t$.
    \end{enumerate}
    Then $G$ forbids a $(\Gamma, t + 1,n)$-rich flower immersion.
\end{lemma}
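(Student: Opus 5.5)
We argue by contradiction. Suppose $\varphi$ is an immersion of the $(\Gamma, t+1, n)$-rich flower into $(G,\gamma)$. Let $c$ be its branch center and $y_1,\dots,y_n$ its other branch vertices, and let $B$ be the unique bag of $(T,\mathscr{B})$ containing $c$. We show that $B$ can satisfy neither outcome in the hypothesis, treating the two outcomes separately.

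\emph{Outcome 1 (few high degree vertices).} We show the torso of $B$ has $n+1$ vertices of degree greater than $t$, none of them new, which is a contradiction. Each $y_i$ has at least $|\Gamma|(t+1)>t$ edge-disjoint branch trails to $c$.
\begin{itemize}
\item Suppose some $y_i$ is not in $B$, say $y_i\in V_j$. Each of these trails must leave $V_j$, so it contributes a distinct edge of $\delta(V_j)$. Hence the new vertex of the torso obtained from $V_j$ has degree greater than $t$, contradicting the requirement that no new vertex has degree greater than $t$.
\item So $y_1,\dots,y_n,c\in B$. For a vertex of $B$, its degree in the torso equals its degree in $G$, because we only delete loops at new vertices and only contract sets disjoint from $B$. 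Each of $y_1,\dots,y_n,c$ has degree greater than $t$ in $G$. This gives $n+1$ vertices of degree greater than $t$ in the torso, a contradiction.
\end{itemize}

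\emph{Outcome 2 (a certificate).} Let $(X,\gamma')$ be a certificate for $B$ of value at most $t$, and let $\Gamma'$ be the proper subgroup generated by the $\gamma'$-labels of $E(B)\setminus X$. The immersion uses some shifting $\gamma''$ of $\gamma$.
\begin{itemize}
\item \emph{Labels of circuits at $c$.} Consider circuits beginning at $c$. Shifting at vertices other than $c$ does not change their labels, and shifting at $c$ conjugates them. So there is a fixed $g\in\Gamma$ such that every circuit $C$ beginning at $c$ satisfies $\gamma'(C)=g\,\gamma''(C)\,g^{-1}$.
\item \emph{Choosing a bad label.} Since $\Gamma'$ is proper, $g^{-1}\Gamma' g$ is also proper. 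Choose $\alpha\notin g^{-1}\Gamma' g$.
\item \emph{Building the circuits.} In the petal at $y_1$ there are $t+1$ branch trails from $c$ to $y_1$ with $\gamma''$-label $\alpha$, and $t+1$ with label $1$. Pair them up as $P_1Q_1^{-1},\dots,P_{t+1}Q_{t+1}^{-1}$. This gives $t+1$ pairwise edge-disjoint circuits beginning at $c$. Each has $\gamma''$-label $\alpha$, so its $\gamma'$-label $g\alpha g^{-1}$ lies outside $\Gamma'$.
\end{itemize}

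The key step is charging these circuits to the value of the certificate. Take any such circuit $C$.
\begin{itemize}
\item If $C$ avoids $X$, it never uses an edge of $\delta(B)\subseteq X$. Since it starts at $c\in B$, all of its edges lie in $E(B)\setminus X$. Then its $\gamma'$-label lies in $\Gamma'$, a contradiction.
\item So $C$ uses an edge of $X$. Either it uses an edge of $X\cap E(B)$, which is counted twice in the value. Or it crosses $\delta(B)$; a closed trail crosses an edge cut an even number of times, so it uses at least two edges of $\delta(B)$.
\end{itemize}
In either case each circuit contributes weight at least $2$ to $\val(X,\gamma')$. The circuits are edge-disjoint, so these contributions come from distinct edges. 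Therefore
\[
2(t+1)\le \val(X,\gamma')\le t,
\]
a contradiction. This completes the plan; the only delicate point is the conjugation bookkeeping between $\gamma''$ and $\gamma'$, which is handled by working with circuits based at $c$.
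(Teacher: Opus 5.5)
Your proof is correct and follows essentially the same approach as the paper: a degree count rules out the first outcome, and the second is ruled out by circuits of every label built inside one petal at the branch center, using that shifting only conjugates the labels of circuits based there. The only difference is organizational: the paper first proves that all branch vertices lie in a single bag, whereas you fix the bag of the center and charge any circuit that leaves $B$ to $\delta(B)$, which makes that preliminary claim unnecessary.
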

\begin{proof}
    Going for a contradiction, suppose that $G$ does admit such an immersion $\varphi$. 
    
    We claim that all of the branch vertices of $\varphi$ are in the same bag. Otherwise, there exist branch vertices $x$ and $y$ in different bags. Let $uv \in E(T)$ so that $x$ and $y$ come from different components of $T\setminus uv$. Note that there are at least $t+1$ pairwise edge-disjoint trails between $x$ and $y$. Now consider which outcome holds for the bag $B_u$. If the first outcome holds, then the torso of $B_u$ would have a new vertex of degree at least $t+1$ (obtained from the component of $T\setminus u$ containing $v$), a contradiction. If the second outcome holds, then the certificate for $B$ would have value at least $t+1$, again a contradiction.

    Thus the branch vertices must be entirely contained in some bag $B$. Note that each branch vertex must have degree at least $t+1$ in the torso. So, since there are $n+1$ branch vertices, the first outcome cannot hold for $B$. We can then delete at most $t$ edges to leave every component containing a branch vertex of $\varphi$ signed over the same proper subgroup of $\Gamma$. However, note that after deleting $t$ edges from even just a petal of the flower, it still contains a circuit of label $\alpha$ for each $\alpha \in \Gamma$. Moreover, we can choose these circuits to all have the center of the flower as their end. So shifting in $(G, \gamma)$ just conjugates the labels of the circuits, which only permutes the group elements. This contradicts the fact that each component containing a branch vertex of $\varphi$ is signed over a proper subgroup of $\Gamma$.
\end{proof}




Now, the first step towards proving the structure theorem is to use the local structure theorem (Theorem~\ref{thm:enrichingFlower}) to show that there is a certificate ``around'' every large flower immersion. More precisely, we will show that every ``large core'' has a certificate of bounded value. We refer the reader to Section~\ref{subsec:structureThm} for the definitions of certificates and their value.

A \textit{$t$-core} of a graph $G$ is a maximal set of vertices, each of degree at least $t+1$, which are pairwise $(t+1)$-edge-connected. (We say that two vertices $u$ and $v$ are \emph{$t$-edge-connected} if every edge cut separating $u$ and $v$ has size at least $t$. Note that being $t$-edge-connected is an equivalence relation.) We require the vertices to have degree at least $t+1$ only for the case where the $t$-core is a single vertex, and so the set of $t$-cores partitions the vertices with degree at least $t+1$. (Recall that the degree of a vertex does not count incident loops.) Each large $t$-core gives a flower immersion, as stated in the lemma below.

\begin{lemma}
\label{lem:TcoreGivesFlower}
    Let $k$ and $n$ be positive integers and let $G$ be a graph. Suppose that $S \subseteq V(G)$ is a set of more than $n$ vertices which are pairwise $kn$-edge-connected. Then $G$ admits a $(k,n)$-flower immersion whose branch vertices are contained in~$S$.
\end{lemma}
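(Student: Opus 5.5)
Pick $n+1$ vertices $x,y_1,\dots,y_n$ of $S$. We will use $x$ as the branch center and the $y_i$ as the remaining branch vertices. The goal is to find, for each $i\in[n]$, $k$ trails from $x$ to $y_i$, such that all $kn$ trails are pairwise edge-disjoint. These trails then form the branch trails of a $(k,n)$-flower immersion whose branch vertices lie in $S$.

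To obtain these trails, build an auxiliary graph $G'$ by adding a new vertex $z$ and, for each $i\in[n]$, $k$ parallel edges between $z$ and $y_i$. We then look for $kn$ pairwise edge-disjoint paths from $x$ to $z$ in $G'$. By Menger's theorem (edge version), it suffices to show that every edge cut $\delta_{G'}(A)$ with $x\in A$ and $z\notin A$ has at least $kn$ edges. Split into two cases.
\begin{itemize}
\item If every $y_i$ lies outside $A$, then $A$ is a subset of $V(G)$ separating $x$ from $y_1$ in $G$. Since $x$ and $y_1$ are $kn$-edge-connected, $|\delta_G(A)|\ge kn$ already.
\item Otherwise some $y_i$ lies in $A$. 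Let $I=\{i : y_i\in A\}$. The cut contains the $k|I|$ edges from $z$ to the vertices $y_i$ with $i\in I$. If $I\ne[n]$, pick $j\notin I$. Then $A\cap V(G)$ separates $x$ from $y_j$ in $G$, so $\delta_G(A)$ contributes at least $kn$ edges. If $I=[n]$, the edges at $z$ alone give $kn$.
\end{itemize}
In every case the cut has at least $kn$ edges, so the $kn$ edge-disjoint $x$–$z$ paths exist.

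Each path ends with an edge $zy_i$, and exactly $k$ paths use the edges at each $y_i$, because $z$ has degree exactly $kn$ and every path uses one of these edges. Deleting $z$ from each path leaves an $x$–$y_i$ path in $G$. These are pairwise edge-disjoint, with exactly $k$ ending at each $y_i$. Orient them away from $x$ and map the $k$ edges of the $i$-th petal to the $k$ paths ending at $y_i$. This is an immersion. Injectivity on vertices holds since the chosen vertices are distinct, and internal vertices of trails may coincide with branch vertices because immersions here are weak.

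No step is a real obstacle. The only care needed is the cut-counting case analysis above, in particular ensuring that a cut containing some but not all $y_i$ still separates $x$ from a vertex of $S$.
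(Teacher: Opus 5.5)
Your proof is correct and is essentially the argument the paper relies on: the paper does not reprove the lemma but defers to \cite[Lemma~1]{WOLLAN2015}, which I take to be this same construction of an auxiliary vertex joined by parallel edges to chosen vertices of $S$ followed by the edge version of Menger's theorem from the center, here with $k$ edges to each of $n$ vertices to separate the two parameters. Your cut analysis (a cut either separates $x$ from some $y_j\notin A$ or contains all $kn$ edges at $z$) and your observation that the degree of $z$ forces exactly $k$ paths to end at each $y_i$ are exactly the points that argument needs.
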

\begin{proof}
    The proof is as given in \cite[Lemma~1]{WOLLAN2015} with the obvious modification to deal with two separate variables $k$ and $n$.
\end{proof}

Now we use Theorem~\ref{thm:enrichingFlower} and Lemma~\ref{lem:TcoreGivesFlower} to prove that every large core has a certificate of small value.


\begin{lemma}
\label{lem:CoresHaveCertificate}
    For any finite group $\Gamma$ and positive integers $k$ and $n$, there is an integer $t=4kn|\Gamma|^{6+\lfloor\log_2|\Gamma|\rfloor}$ so that the following holds for any 2-edge-connected $\Gamma$-labeled graph $(G, \gamma)$ which forbids a $(\Gamma, k, n)$-rich flower immersion. For any $t$-core $S$ of size more than $n|\Gamma|$, there exists a set of vertices $B$ containing $S$ so that there is a certificate $(X, \gamma')$ for $B$ of value at most~$t$.
\end{lemma}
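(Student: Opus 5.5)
Set $k' = k|\Gamma|^{4+\lfloor\log_2|\Gamma|\rfloor}$ and $n' = n|\Gamma|$ as in Theorem~\ref{thm:enrichingFlower}. Note that $k'n' \le t$, so the vertices of $S$ are pairwise $(t+1)$-edge-connected and hence pairwise $k'n'$-edge-connected. Since $|S| > n' = n|\Gamma|$, Lemma~\ref{lem:TcoreGivesFlower} gives an immersion $\varphi$ of the $(k',n')$-flower into $G$ with all branch vertices in $S$; let $x\in S$ be the branch center. Apply Theorem~\ref{thm:enrichingFlower} to $\varphi$. The first outcome is excluded because $(G,\gamma)$ forbids the $(\Gamma,k,n)$-rich flower. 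So there are a proper subgroup $\Gamma'$, a shifting $\gamma'$, and a set $X_0\subseteq E(G)$ with $|X_0|\le 2n'k'|\Gamma| = 2kn|\Gamma|^{6+\lfloor\log_2|\Gamma|\rfloor} = t/2$, such that the edge-block $D$ of $G\setminus X_0$ containing $x$ is labeled over $\Gamma'$ under $\gamma'$.

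Take $B = V(D)$. First we check $S\subseteq B$. Any $y\in S$ is $(t+1)$-edge-connected to $x$ in $G$, so it is still $(t+1-|X_0|)\ge 2$-edge-connected to $x$ in $G\setminus X_0$. Hence $x$ and $y$ lie in the same edge-block of $G\setminus X_0$, which is $D$.

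Next let $X = \delta_G(B)\cup (X_0\cap E(B))$. Every edge of $E(B)\setminus X$ is an edge of $G\setminus X_0$ with both ends in $D$. An edge-block is an induced-type object: a non-bridge edge between two vertices of $D$ lies in $D$. Indeed, any edge joining two vertices of the same 2-edge-connected piece lies on a cycle, so it is not a bridge and belongs to that block. So these edges carry $\gamma'$-labels in $\Gamma'$, which is proper, and $(X,\gamma')$ is a certificate for $B$.

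It remains to bound the value, which I expect to be the main point. Edges of $\delta_G(B)$ are either in $X_0$, or are edges of $G\setminus X_0$ leaving the edge-block $D$, i.e.\ bridges of $G\setminus X_0$. The bridges need care: $G\setminus X_0$ may have many bridges leaving $D$ in general. Here we use that $G$ is 2-edge-connected. Each bridge $e$ of $G\setminus X_0$ incident to $D$ separates off a component $K_e$ of $(G\setminus X_0)\setminus e$ not containing $D$, and these components are disjoint for distinct such bridges. Since $G$ is 2-edge-connected, $\delta_G(V(K_e))$ contains an edge other than $e$, which must lie in $X_0$. Each edge of $X_0$ has two ends, so it can be charged this way at most twice. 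Thus the number of such bridges is at most $2|X_0|$, and edges of $X_0$ inside $\delta(B)$ or $E(B)$ contribute at most $2|X_0|$ in total. This gives a value of at most about $4|X_0|\le 2t$, off by a constant factor. To fit within $t$, I would sharpen the charging: an $X_0$-edge counted in $E(B)$ (weight 2) has both ends in $B$ and so cannot also be charged to a bridge component. The accounting becomes: each $X_0$-edge contributes total weight at most 2, counting its own membership plus charges. This yields value at most $2|X_0|\le t$. Checking this weight-2 accounting carefully is where I expect the work to lie.
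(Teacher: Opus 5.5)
Your proposal is correct and follows the paper's proof essentially step for step. It uses the same flower from Lemma~\ref{lem:TcoreGivesFlower}, the same application of Theorem~\ref{thm:enrichingFlower}, the same choice of $B$ as the edge-block with $X=\delta_G(B)\cup(X_0\cap E(B))$, and the same charging of each bridge leaving $B$ to an $X_0$-edge on the boundary of the component it cuts off. The weight-$2$ accounting you left to check goes through exactly as in the paper. An edge of $X_0\cap E(B)$ has weight $2$ and receives no charge. An edge of $X_0\cap\delta_G(B)$ has weight $1$ and at most one charge, since only one of its ends lies outside $B$. An edge of $X_0$ with no end in $B$ has weight $0$ and at most two charges. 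This gives value at most $2|X_0|\le t$.
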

\begin{proof}
    For convenience, set $k' = k|\Gamma|^{4 + \lfloor\log_2 |\Gamma|\rfloor}$ and $n'=n|\Gamma|$. By Lemma~\ref{lem:TcoreGivesFlower}, there exists an immersion $\varphi$ of the $(k', n')$-flower into $G$ whose branch vertices are contained in $S$. (This step is slightly wasteful in order to make the next step more convenient.) By Theorem~\ref{thm:enrichingFlower}, there exist a shifting $\gamma'$ of $\gamma$ and a set $X \subseteq E(G)$ of size at most $2n'k'|\Gamma|$ such that the edge-block of $G \setminus X$ containing the branch center of $\varphi$ is labeled over a proper subgroup of $\Gamma$. Note that $2n'k'|\Gamma|=t/2$. Thus, since $S$ is a $t$-core and $t\geq |X|+2$, all of $S$ is contained in the same edge-block of~$G\setminus X$. 
    
    Let $B$ be the vertex set of this edge-block. Let $X'$ denote the union of $\delta_G(B)$ and $X \cap E(B)$. Thus $(X', \gamma')$ is a certificate for $B$. We claim that this certificate has value at most $2|X|$; this will complete the proof. First of all, notice that each edge in $X \cap E(B)$ contributes $2$ to the value. Next we show how to associate each edge $e \in \delta_G(B)\setminus X$ to an edge in $X \setminus E(B)$. So, let $e \in \delta_G(B)\setminus X$, and consider the component $B'$ of $(G\setminus X) \setminus B$ which contains an end of $e$. Since $G$ is $2$-edge-connected, $B'$ is incident to an edge $f \in X$, and we associate $e$ to $f$. Thus each edge $f \in X \cap \delta_G(B)$ has only one associated edge $e$, and the total contributed value of $e$ and $f$ is $2$. Likewise, each edge $f \in X$ which is not incident to $B$ has at most two associated edges $e$ and $e'$, and the total contributed value of $e$ and $e'$ is $2$. (In this case the edge $f$ itself does not contribute anything to the value.) This completes the proof.
\end{proof}

In order to use these certificates to obtain a tree-cut decomposition, we want to choose them in a special way. For the next lemma, it is useful to know the best value of a certificate for a given set of vertices. So let $\Gamma$ be a group, let $(G, \gamma)$ be a $\Gamma$-labeled graph, and let $B \subseteq V(G)$. Then the \textit{value} of $B$, denoted $\val(B)$, is the minimum value of a certificate for $B$. Notice that the value of $B$ equals $|\delta_G(B)|$ plus the minimum, over all shiftings $\gamma'$ of $\gamma$ and proper subgroups $\Gamma'$ of $\Gamma$, of twice the number of edges in $E(B)$ labeled outside of $\Gamma'$. The value is a \emph{posimodular} function on subsets of $V(G)$: for any $A,B \subseteq V(G)$,
$$\val(A) + \val(B) \geq \val(A \setminus B) + \val(B \setminus A).$$
This follows from the fact that $\delta_G(\cdot)$ is posimodular, and the minimum number of internal edges outside of a proper subgroup over all shiftings is increasing.

Now consider a $\Gamma$-labeled graph $(G, \gamma)$ and a family $\mathcal{S}$ of $t$-cores in $G$. A \emph{container system for $\mathcal{S}$} is a collection $\mathcal{C}$ of subsets of $V(G)$ so that each set in $\mathcal{S}$ is contained in a set in $\mathcal{C}$. The \emph{value} of $\mathcal{C}$ is the maximum value of a set in $\mathcal{C}$. Finally, we say that $\mathcal{C}$ is \emph{refined} if for any $C\in \mathcal{C}$, there exists a vertex $v \in C$ which is \emph{fully connected to $\delta(C)$}, that is, so that for every $A \subseteq C$ which contains $v$, we have $|\delta(A)|\geq |\delta(C)|$. Next we prove that there is a refined container system whose sets are pairwise disjoint.

\begin{lemma}
\label{lem:LaminarCertificates}
    Let $\Gamma$ be a group, let $t$ be a positive integer, let $(G, \gamma)$ be a $\Gamma$-labeled graph, and let $\mathcal{S}$ be a collection of $t$-cores which admits a container system of value at most~$t$. Then there exists a refined container system $\mathcal{C}$ for $\mathcal{S}$ of value at most~$t$ so that the sets in $\mathcal{C}$ are pairwise disjoint.
\end{lemma}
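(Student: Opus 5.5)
We will choose the container system by an extremal argument and use posimodularity of $\val$ to uncross.

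Among all container systems for $\mathcal{S}$ of value at most $t$, we pick $\mathcal{C}$ such that: (i) $|\mathcal{C}|$ is minimal, which lets us assume each $C\in\mathcal{C}$ contains some core of $\mathcal{S}$ (otherwise we discard $C$); (ii) subject to this, $\sum_{C\in\mathcal{C}}\val(C)$ is minimal; and (iii) subject to both, $\sum_{C\in\mathcal{C}}|C|$ is minimal.

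The key observation is that a $t$-core $S$ cannot be split by a set $A$ with $\val(A)\le t$. Indeed, if $S$ met both $A$ and $V(G)\setminus A$, then two vertices of $S$ would be separated by the cut $\delta(A)$. But $|\delta(A)|\le\val(A)\le t$, whereas vertices of $S$ are pairwise $(t+1)$-edge-connected. Hence each core in $\mathcal{S}$ lies entirely inside or entirely outside every set of value at most $t$.

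\textbf{Disjointness.} Suppose $C_1,C_2\in\mathcal{C}$ intersect. By posimodularity,
$$\val(C_1)+\val(C_2)\ge \val(C_1\setminus C_2)+\val(C_2\setminus C_1).$$
So at least one inequality $\val(C_1\setminus C_2)\le \val(C_1)$ or $\val(C_2\setminus C_1)\le\val(C_2)$ holds; say the first. Replace $C_1$ by $C_1\setminus C_2$. Since values are at most $t$, every core that was in $C_1$ and met $C_2$ lies entirely inside $C_2$ by the observation, so cores remain covered. The value sum does not increase, and the size sum strictly decreases because $C_1\cap C_2\ne\emptyset$. If $C_1\setminus C_2$ is empty or contains no core, drop it, which decreases $|\mathcal{C}|$. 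Either way this contradicts the extremal choice, so the sets in $\mathcal{C}$ are pairwise disjoint.

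\textbf{Refinement.} Fix $C\in\mathcal{C}$ containing a core $S$, and take any $v\in S$. Suppose there is $A\subseteq C$ with $v\in A$ and $|\delta(A)|<|\delta(C)|$. We want to show $\val(A)<\val(C)$. A certificate for $C$ restricts to one for $A$: take a shifting $\gamma'$ and proper subgroup $\Gamma'$ witnessing $\val(C)$. The edges of $E(A)$ labeled outside $\Gamma'$ are among those of $E(C)$, so the internal term does not increase. Since the boundary term strictly drops, $\val(A)<\val(C)\le t$. By the observation, $S\subseteq A$, since $v\in S\cap A$. Any other core of $\mathcal{S}$ inside $C$ but not inside $A$ is a problem, so we must choose $A$ carefully.

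The fix is to choose $v$ from a core $S\subseteq C$ and take $A$ minimizing $|\delta(A)|$ among subsets of $C$ containing $S$. Replacing $C$ by $A\cup(C\setminus A)$ in pieces would be messy, so instead we argue directly. Disjointness plus minimality (i) forces each $C$ to contain exactly cores that are all covered by $C$ alone. Replacing $C$ by $A$, together with the sets $C\setminus A$ if needed, keeps a container system. Posimodularity applied to $A$ and $C\setminus A$ yields values at most $t$, and the value sum strictly drops, contradicting (ii).

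This last step is where I expect the real difficulty. The cores of $C$ lying outside $A$ must be handled, possibly by refining the extremal order to prefer more sets of smaller value or by choosing $v$ in a specific core. I would first try letting the minimality in (i) be replaced by minimizing the sum of values with $|\mathcal{C}|$ unconstrained.
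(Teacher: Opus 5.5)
Your key observation (a set of value at most $t$ cannot split a $t$-core), your bound $\val(A)<\val(C)$, and your disjointness argument are correct and essentially match the paper. The gap is in the refinement step, and the problem is your extremal order, not only the unfinished last step. When $C$ contains several cores, refinement can force you to replace $C$ by several sets of smaller value, and both $|\mathcal{C}|$ and $\sum_{C}\val(C)$ penalize that.

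Here is a concrete example. Take $\Gamma=\mathbb{Z}_2$ and $t\geq 4$ even. Let $S_1,S_2$ be sets of at least two vertices, with $t+1$ identity-labeled parallel edges between every pair inside each $S_i$, and join $S_1$ to $S_2$ by one edge. Add a vertex $z$ with $t/2$ edges to each $S_i$ and $t$ non-identity loops, so every set containing $z$ has value at least $2t$. Then $S_1,S_2$ are $t$-cores and $C=S_1\cup S_2$ has value $t$. The system $\{C\}$ is the only one-set container system of value at most $t$, and it also beats $\{S_1,S_2\}$ in total value ($t<t+2$). Yet $C$ is not refined, because every vertex of $C$ lies in some $S_i$ and $|\delta(S_i)|=t/2+1<|\delta(C)|$. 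So neither your order (i)--(iii) nor your suggested ``sum of values'' order can produce a refined system.

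Your proposed repair via $C\setminus A$ also fails on its own terms. Posimodularity applied to $C$ and $A$ only gives $\val(C\setminus A)\le\val(C)+\val(A)-\val(\emptyset)$. In general $|\delta(C\setminus A)|$ equals $|\delta(C)|+|\delta(A)|$ minus twice the number of edges from $A$ to $V(G)\setminus C$, which can be close to $2t$.

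The missing idea is a potential under which splitting one set into many sets of strictly smaller value counts as progress. The paper chooses $\mathcal{C}$ minimizing $\sum_{C\in\mathcal{C}}(|V(G)|+1)^{\val(C)}$ and, subject to that, $\sum_{C\in\mathcal{C}}|C|$. Your disjointness argument goes through essentially unchanged under this order.

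For refinement, argue by contradiction. If no vertex of $C$ is fully connected to $\delta(C)$, then for \emph{every} core $S\in\mathcal{S}$ with $S\subseteq C$ there are $v_S\in S$ and $A_S\subseteq C$ with $v_S\in A_S$ and $|\delta(A_S)|<|\delta(C)|$. By your observation $S\subseteq A_S$, and $\val(A_S)<\val(C)\le t$. Replacing $C$ by all of the sets $A_S$ keeps a container system of value at most $t$. Since $t$-cores are pairwise disjoint, there are at most $|V(G)|$ new sets, each contributing at most $(|V(G)|+1)^{\val(C)-1}$, so the potential strictly drops. No set of the form $C\setminus A$ is ever needed.
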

\begin{proof}
    Choose a container system $\mathcal{C}$ for $\mathcal{S}$ which has value at most $t$ and minimizes
    \begin{equation}\label{minFunction}
        \sum_{C \in \mathcal{C}} (|V(G)| + 1)^{\val(C)}\tag{$*$}
    \end{equation}
    and subject to this, minimizes
    \begin{equation}\label{secondaryMinimizer}
        \sum_{C \in \mathcal{C}} |C|. \tag{$**$}
    \end{equation} First note that there do not exist any distinct $B, C \in \mathcal{C}$ with $B \subseteq C$, for then we could just remove $B$ from $\mathcal{C}$.
    
    Now we claim that the sets in $\mathcal{C}$ are pairwise disjoint. Going for a contradiction, suppose that there exist distinct $B,C \in \mathcal{C}$ with non-empty intersection. By posimodularity, either $\val(B \setminus C) \leq \val(B)$ or $\val(C \setminus B) \leq \val(C)$. Suppose without loss of generality that $\val(B \setminus C) \leq \val(B)$. Let $\mathcal{C'}$ be obtained from $\mathcal{C}$ by removing $B$ and adding $B \setminus C$. We claim that $\mathcal{C}'$ is a container system for $\mathcal{S}$. To see this, let $S \in \mathcal{S}$ with $S \subseteq B$. Since the vertices in $S$ are pairwise $(t+1)$-edge-connected and $|\delta(C)|\leq \val(C)\leq t$, either $S \subseteq C$ or $S \subseteq B \setminus C$. Either way, there is a set in $\mathcal{C}'$ which contains $S$. Finally, note that \eqref{minFunction} does not increase since $\val(B \setminus C) \leq \val(B)$, and \eqref{secondaryMinimizer} strictly decreases since $|B\setminus C|<|B|$. This contradicts the choice of~$\mathcal{C}$.

    It just remains to prove that $\mathcal{C}$ is refined. Going for a contradiction, suppose that there exists $C \in \mathcal{C}$ such that no vertex in $C$ is fully connected to $\delta(C)$. Then, in particular, for each $S \in \mathcal{S}$ which is contained in $C$, there exist a vertex $v_S \in S$ and a set $A_S \subseteq C$ so that $v_S \in A_S$ and $|\delta(A_S)|<|\delta(C)|$. (We may choose $v_S$ to be any arbitrary vertex in $S$.) Since $|\delta(C)| \leq \val(C)\leq t$ and the vertices in $S$ are pairwise $(t+1)$-edge-connected, we actually have $S \subseteq A_S$. Thus we can obtain a new container system $\mathcal{C}'$ from $\mathcal{C}$ by removing $C$ and adding all of the sets $A_S$. 
    
    Note that each set $A_S$ has value strictly less than the value of $C$; the minimum number of edges outside of a subgroup over all shiftings does not increase, and $|\delta(A_S)|<|\delta(C)|$. Since the $t$-cores of a graph are pairwise vertex-disjoint, $|\mathcal{S}|\leq |V(G)|$ and there are at most $|V(G)|$ new terms in the sum in \eqref{minFunction}. Moreover, each term is smaller by a factor of at least $|V(G)| + 1$. This contradicts the choice of $\mathcal{C}$ and completes the proof of Lemma~\ref{lem:LaminarCertificates}.
\end{proof}

We now prove the following lemma, which nearly completes the proof of Theorem~\ref{thm:MainStructureThm}. Since this lemma works relative to a fixed container system, we do not need to consider a group labeling.


\begin{lemma}
\label{lem:decompFromContainers}
    Let $t$ and $n$ be positive integers, let $G$ be a $2$-edge-connected graph, and let $\mathcal{C}$ be a refined container system for the collection of all $t$-cores of size more than $n$ so that the sets in $\mathcal{C}$ are pairwise disjoint. Then there exists a tree-cut decomposition $(T, \mathscr{B})$ of $G$ such that for every bag $B \in \mathscr{B}$, at least one of the following holds.
    \begin{enumerate}
        \item\label{itm:bagType1} There are at most $n$ vertices in the torso of $B$ of degree greater than $t$, and none of them are the new vertices of the torso.
        \item\label{itm:bagType2} The set $B$ is in $\mathcal{C}$.
    \end{enumerate}
\end{lemma}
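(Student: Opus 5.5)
Here is the plan. Let $\mathcal{C}=\{C_1,\dots,C_m\}$. I would reduce to the unlabeled structure theorem of Wollan~\cite{WOLLAN2015} by contracting each container. Let $G'$ be the graph obtained from $G$ by identifying each $C_i$ to a single vertex $c_i$, deleting the resulting loops and keeping parallel edges. Every $t$-core of $G$ of size more than $n$ lies inside some $C_i$, so $G'$ should have no large $t$-core except possibly ones involving the $c_i$. I would then build a tree-cut decomposition of $G'$ in which each $c_i$ is alone in its bag. Expanding $c_i$ back to $C_i$ gives a tree-cut decomposition of $G$ whose bags are either type~\ref{itm:bagType2} bags $C_i$ or bags of $G'$ avoiding all the $c_i$.

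To build the decomposition of $G'$, I would follow the greedy construction in \cite{WOLLAN2015}, which produces a tree-cut decomposition where each bag's torso has at most $n$ vertices of degree greater than $t$ and new vertices have small degree. The plan has four steps.

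\begin{enumerate}
\item Show that edge-connectivity between vertices outside $\bigcup_i C_i$ does not increase under contraction, in the sense needed: vertices $u,v\notin\bigcup_i C_i$ that are $(t+1)$-edge-connected in $G'$ are already so in $G$. This is where refinedness is used. A minimum $u$--$v$ cut $\delta_G(Y)$ that splits some $C_i$ can be uncrossed so as not to split $C_i$. Take $w\in C_i$ fully connected to $\delta(C_i)$ and assume $w\in Y$. Then $|\delta(Y\cup C_i)|\le|\delta(Y)|$ follows from submodularity together with $|\delta(Y\cap C_i)|\ge|\delta(C_i)|$. Hence cuts in $G$ avoiding the interiors of containers are as good as arbitrary cuts, and $t$-cores of $G$ outside the containers correspond to cores of $G'$.
\item Conclude that every $t$-core of $G'$ disjoint from $\{c_1,\dots,c_m\}$ has size at most $n$.
\item Handle the $c_i$ themselves, which may have large degree and lie in large cores. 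To do this I would first split each $c_i$ off into its own leaf bag. Concretely, form a star-like top-level decomposition in which each $c_i$ gets its own node. I would then apply the Wollan-type decomposition to the remaining pieces, treating edges to the $c_i$ as going to new vertices.
\item Verify the requirement that new vertices of type-\ref{itm:bagType1} torsos have degree at most $t$. Each adhesion is either some $\delta(C_i)$ with $|\delta(C_i)|\le \val(C_i)\le t$, or a cut produced by the construction, which is chosen of size at most $t$ via the non-$(t+1)$-edge-connectivity of the core representatives.
\end{enumerate}

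The main obstacle is the interaction in step~3. A torso of a non-container bag could still contain more than $n$ high-degree vertices, since a vertex of degree more than $t$ adjacent to containers might not be $(t+1)$-connected to the others. So I would redo Wollan's argument directly rather than citing it. Process the $t$-cores of size at most $n$ and the containers, and repeatedly pick minimum cuts separating distinct cores or containers, using posimodularity of $|\delta(\cdot)|$ to make the chosen cuts laminar. The refined property, used as in step~1, should guarantee that these laminar cuts never split a container. Each resulting region then contains either exactly one container, taken as a bag, or at most one small core, giving at most $n$ high-degree torso vertices. $2$-edge-connectivity is used to ensure that contracted sides do not create loops that would affect degrees.
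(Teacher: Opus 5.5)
Your proof is correct in outline but takes a different route from the paper, and the write-up is more tangled than it needs to be.

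\textbf{What your argument does.} Steps 1--2 carry the real content. Uncrossing a minimum cut with each $C_i$ at its fully connected vertex gives $\lambda_{G'}(u,v)=\lambda_G(u,v)$ for all $u,v\notin\bigcup_i C_i$. Hence every $t$-core of $G'$ has at most $n$ vertices.

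\textbf{Step 3 and the ``main obstacle'' are not real.} We have $\deg_{G'}(c_i)=|\delta(C_i)|\le t$. You use this bound in Step 4, and the paper needs it too, although the statement leaves it implicit. So no $c_i$ lies in any core. Your Step 3 worry and the ``main obstacle'' are artifacts of building a star around the containers first. Instead, decompose all of $G'$, as follows.
\begin{enumerate}
\item Take a Gomory--Hu tree of $G'$ and contract its edges of weight at least $t+1$.
\item The bags are then the $(t+1)$-edge-connectivity classes. Each contains at most one $t$-core, hence at most $n$ vertices of degree more than $t$, and every adhesion is a tree weight at most $t$.
\item Move each $c_i$ into a new leaf attached to the node whose bag contained it. This adds an adhesion $\deg_{G'}(c_i)\le t$ and changes no other torso.
\item Expand each $c_i$ back to $C_i$.
\end{enumerate}
This is exactly what your ``laminar minimum cuts'' amounts to, so cite Gomory--Hu rather than re-deriving the uncrossing. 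Your remark about 2-edge-connectivity and loops is spurious: loops are never counted, and this route never uses 2-edge-connectivity.

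\textbf{Comparison with the paper.} The paper instead inducts on $|V(G)|$, contracting one set $C$ at a time. This $C$ is either the container of a maximum $t$-core or a minimal set with $|\delta(C)|\le t$ containing a small core. It uses the fully connected vertex to see that $G$ immerses $G'$, which plays the role of your Step 1, and then re-attaches $C$ as a leaf. This is self-contained and avoids Gomory--Hu.

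However, the paper's small-core step needs a more careful choice of $C$ than minimality alone provides. Take the path $r\,a\,b\,r'$ with edge multiplicities $2,4,2$, and let $t=4$, $n=1$. The $t$-cores are $\{a\}$ and $\{b\}$. The set $C=\{a,b\}$ is a minimal set containing $S=\{a\}$ with $|\delta(C)|=4$, yet it contains two vertices of degree $6>t$. Your decomposition by $(t+1)$-edge-connectivity classes is immune to this problem.
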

\begin{proof}
    We proceed by induction on $|V(G)|$. We may assume that there exists a $t$-core since otherwise all vertices have degree at most $t$ and we can take the tree-cut decomposition with one bag. Let $S$ be a $t$-core of maximum size. 
    
    Define a set of vertices $C$ as follows. If $|S|>n$, then let $C$ be the set in $\mathcal{C}$ containing $S$. Otherwise, let $C$ be a minimal set of vertices which contains $S$ and has $|\delta(C)|\leq t$. (Such a set exists since we could take $C=V(G)$.) Note that in either case, there exists a vertex in $C$ which is fully connected to $\delta(C)$. In the first case, such a vertex exists since $\mathcal{C}$ is refined. In the second case, such a vertex exists by the minimality of $C$ and since the vertices in $S$ are pairwise $(t+1)$-edge-connected. 
   
    Now, let $G'$ be the graph formed from $G$ by identifying $C$ to a single vertex; we do not include any loops at this new vertex. Notice that $G'$ is $2$-edge-connected. Also note that $G'$ has strictly fewer vertices than $G$; if $|S|>1$ then this is clear, and if $|S|=1$ then $C \neq S$ since the vertex in $S$ has degree at least $t+1$ (not counting loops) and $|\delta(C)|\leq t$. Moreover, notice that $G$ admits $G'$ as an immersion since $C$ contains a vertex which is fully connected to $\delta(C)$. It follows that the edge connectivity between any two vertices in $V(G') \cap V(G)$ is at least as large in $G$ as in $G'$. Thus, since the new vertex of $G'$ has degree at most $t$, every $t$-core of $G'$ is contained in a $t$-core of $G$.
    
    Now, if $|S|>n$, then let $\mathcal{C}'$ be the collection obtained from $\mathcal{C}$ by removing $C$, and otherwise let $\mathcal{C}'$ be empty. Note that $\mathcal{C}'$ is a refined container system for the collection of all $t$-cores of $G'$ of size more than $n$, and the sets in $\mathcal{C}'$ are pairwise disjoint. Thus we can apply induction to obtain a tree-cut decomposition $(T', \mathscr{B}')$ of $G'$. Let $(T, \mathscr{B})$ be the tree-cut decomposition of $G$ obtained from $(T', \mathscr{B}')$ by removing the new vertex from the bag $B_u' \in \mathscr{B}'$ which contains it, and adding a new leaf node to $T'$ at the vertex $u$ so that the bag of this new leaf is $C$. 
    
    Finally, consider a bag $B \in \mathscr{B}$. If $B=C$ and $|S|>n$, then $B \in \mathcal{C}$ and the second outcome of the lemma holds. If $B=C$ and $|S|\leq n$, then the new vertex of the torso of $B$ has degree at most $t$ since $|\delta(C)| \leq t$. Moreover, since $C$ was chosen to be minimal, no vertex in $C\setminus S$ has degree larger than $t$ because there is some edge cut of size at most $t$ which separates this vertex from $S$. Thus the first outcome holds for $B$. So we may assume that $B\neq C$. 
    
    Thus there exists a bag $B' \in \mathscr{B}'$ which corresponds to $B$. If $B' \in \mathcal{C}'$, then $B'$ does not contain the new vertex of $G'$, and so $B=B'$ and the second outcome of the lemma holds. Thus we may assume that there are at most $n$ vertices in the torso of $B'$ of degree greater than $t$, and none of them are the new vertices of the torso. If $B'$ does not contain the new vertex of $G'$, then $B=B'$ and the torso of $B$ in $(T, \mathscr{B})$ is the same as the torso of $B'$ in $(T', \mathscr{B}')$. Thus in this case the first outcome of the lemma holds. So we may assume that $B'$ contains the new vertex of $G'$. This case is also fine since the new leaf vertex of $T$ will yield a new vertex in the torso of $B$ of degree at most $t$. This completes the proof of Lemma~\ref{lem:decompFromContainers}.
\end{proof}

We are ready to prove the structure theorem, which we restate for convenience.

\maintheorem*

\begin{proof}
    Recall that the $(\Gamma, k, n)$-rich flower admits as an immersion any $\Gamma$-labeled graph with $n$ vertices where every vertex is incident to at most $k$ edges. Thus $(G, \gamma)$ forbids a $(\Gamma, k, n)$-rich flower immersion. So by Lemma~\ref{lem:CoresHaveCertificate}, every $t$-core of size more than $n|\Gamma|$ is contained in a set of value at most $t$. Let $\mathcal{S}$ be the set of all $t$-cores of size more than $n|\Gamma|$. Now by Lemma~\ref{lem:LaminarCertificates}, there exists a refined container system $\mathcal{C}$ for $\mathcal{S}$ so that $\mathcal{C}$ has value at most $t$ and the sets in $\mathcal{C}$ are pairwise disjoint. Let $\gamma'$ be a shifting of $\gamma$ so that for each $C \in \mathcal{C}$, there exists a set of edges $X$ so that $(X, \gamma')$ is a certificate for $C$ of value at most $t$. We can obtain $\gamma'$ by shifting for each $C \in \mathcal{C}$ in any arbitrary order; note that shifting at a vertex outside of $C$ will not affect the certificate $(X, \gamma')$ for $C$. Now, let $(T, \mathscr{B})$ be the tree-cut decomposition of $G$ obtained by applying Lemma~\ref{lem:decompFromContainers} to $\mathcal{C}$ and the $t$-cores of size more than $n|\Gamma|$. Then $\gamma'$ and $(T, \mathscr{B})$ satisfy the desired conditions.
\end{proof}

\section{Acknowledgments}
Part of this work was completed at the 2026 Barbados Graph Theory Workshop held at the Bellairs Research Institute in January 2026. We are grateful to the institute for providing an excellent research environment. 

\bibliography{refs}

\end{document}